\documentclass{amsart} 

\title{Multiplicity of characters of finite reductive groups and Drinfeld doubles}
 \author{GyeongHyeon Nam}
 
 \address{G.Nam - Department of Mathematics and Systems Analysis, Aalto University, Espoo 02150, Finland}
\email{\href{mailto:gyeonghyeon.nam@aalto.fi}{gyeonghyeon.nam@aalto.fi}, \href{mailto:gyeonghyeon.alg@gmail.com}{gyeonghyeon.alg@gmail.com}}

\usepackage{fullpage}
\usepackage{amsmath, amsthm, amssymb, amsfonts, amsthm, mathrsfs}
\usepackage{textgreek, makecell, enumerate, bm, mathtools, thmtools, float, cite, wrapfig, multicol}
\usepackage[dvipsnames]{xcolor}
\usepackage{colonequals}
\usepackage{hyperref}
\hypersetup{
linktoc = section,
colorlinks = true,
urlcolor = Blue,
linkcolor = Red,
citecolor = RoyalBlue	
}
\usepackage[nobysame, alphabetic]{amsrefs}
\usepackage{comment}

\theoremstyle{plain}
\newtheorem{thm}{Theorem}
\newtheorem{lem}[thm]{Lemma}
\newtheorem{prop}[thm]{Proposition}

\newtheorem{cor}[thm]{Corollary}
\newtheorem{ques}[thm]{Question}
\theoremstyle{definition}
\newtheorem{defe}[thm]{Definition}  
 
\theoremstyle{remark}
\newtheorem{rem}[thm]{Remark}

\definecolor{red}{rgb}{1,0,0}
\definecolor{orange}{rgb}{1,0.5,0}
\definecolor{purple}{rgb}{.5,.2,.8}
\definecolor{blue}{rgb}{.2,.2,.8}
\definecolor{green}{rgb}{.4,.6,.4}

\newcommand{\gncom}[1]{\noindent  \textcolor{purple}{$[\star$ N: #1 $\star]$}}

\newcommand{\nc}{\newcommand}
\newcommand{\rc}{\renewcommand}
\nc{\on}{\operatorname}
\nc{\Fq}{\mathbb{F}_q}
\nc{\fg}{\mathfrak{g}}
\nc{\ft}{\mathfrak{t}}

\nc{\ra}{\rightarrow}
\newcommand{\fT}{\mathfrak{T}}

\rc{\AA}{\mathbb{A}}	
\nc{\BB}{\mathbb{B}}	
\nc{\CC}{\mathbb{C}}	
\nc{\DD}{\mathbb{D}}	
\nc{\EE}{\mathbb{E}}	
\nc{\FF}{\mathbb{F}}	
\nc{\GG}{\mathbb{G}}	
\nc{\HH}{\mathbb{H}}	
\nc{\II}{\mathbb{I}}	
\nc{\JJ}{\mathbb{J}}	
\nc{\KK}{\mathbb{K}}	
\nc{\LL}{\mathbb{L}}	
\nc{\MM}{\mathbb{M}}	
\nc{\NN}{\mathbb{N}}	
\nc{\OO}{\mathbb{O}}	
\nc{\PP}{\mathbb{P}}	
\nc{\QQ}{\mathbb{Q}}	
\nc{\RR}{\mathbb{R}}	
\rc{\SS}{\mathbb{S}}	
\nc{\TT}{\mathbb{T}}	
\nc{\UU}{\mathbb{U}}	
\nc{\VV}{\mathbb{V}}	
\nc{\WW}{\mathbb{W}}	
\nc{\XX}{\mathbb{X}}	
\nc{\YY}{\mathbb{Y}}	
\nc{\ZZ}{\mathbb{Z}}	

\nc{\bA}{\mathbf{A}}	
\nc{\bB}{\mathbf{B}}	
\nc{\bC}{\mathbf{C}}	
\nc{\bD}{\mathbf{D}}	
\nc{\bE}{\mathbf{E}}	
\nc{\bF}{\mathbf{F}}	
\nc{\bG}{\mathbf{G}}	
\nc{\bH}{\mathbf{H}}	
\nc{\bI}{\mathbf{I}}	
\nc{\bJ}{\mathbf{J}}	
\nc{\bK}{\mathbf{K}}	
\nc{\bL}{\mathbf{L}}	
\nc{\bM}{\mathbf{M}}	
\nc{\bN}{\mathbf{N}}	
\nc{\bO}{\mathbf{O}}	
\nc{\bP}{\mathbf{P}}	
\nc{\bQ}{\mathbf{Q}}	
\nc{\bR}{\mathbf{R}}	
\nc{\bS}{\mathbf{S}}	
\nc{\bT}{\mathbf{T}}	
\nc{\bU}{\mathbf{U}}	
\nc{\bV}{\mathbf{V}}	
\nc{\bW}{\mathbf{W}}	
\nc{\bX}{\mathbf{X}}	
\nc{\bY}{\mathbf{Y}}	
\nc{\bZ}{\mathbf{Z}}	

\nc{\calA}{\mathcal{A}}	
\nc{\calB}{\mathcal{B}}	
\nc{\calC}{\mathcal{C}}	
\nc{\calD}{\mathcal{D}}	
\nc{\calE}{\mathcal{E}}	
\nc{\calF}{\mathcal{F}}	
\nc{\calG}{\mathcal{G}}	
\nc{\calH}{\mathcal{H}}	
\nc{\calI}{\mathcal{I}}	
\nc{\calJ}{\mathcal{J}}	
\nc{\calK}{\mathcal{K}}	
\nc{\calL}{\mathcal{L}}	
\nc{\calM}{\mathcal{M}}	
\nc{\calN}{\mathcal{N}}	
\nc{\calO}{\mathcal{O}}	
\nc{\calP}{\mathcal{P}}	
\nc{\calQ}{\mathcal{Q}}	
\nc{\calR}{\mathcal{R}}	
\nc{\calS}{\mathcal{S}}
\nc{\calT}{\mathcal{T}}	
\nc{\cU}{\mathcal{U}}	
\nc{\calV}{\mathcal{V}}	
\nc{\calW}{\mathcal{W}}
\nc{\calX}{\mathcal{X}}	
\nc{\calY}{\mathcal{Y}}	
\nc{\calZ}{\mathcal{Z}}

\nc{\fraka}{\mathfrak{a}}
\nc{\frakb}{\mathfrak{b}}
\nc{\frakc}{\mathfrak{c}}
\nc{\frakd}{\mathfrak{d}}
\nc{\frake}{\mathfrak{e}}
\nc{\frakf}{\mathfrak{f}}
\nc{\frakg}{\mathfrak{g}}
\nc{\frakh}{\mathfrak{h}}
\nc{\fraki}{\mathfrak{i}}
\nc{\frakj}{\mathfrak{j}}
\nc{\frakk}{\mathfrak{k}}
\nc{\frakl}{\mathfrak{l}}
\nc{\frakm}{\mathfrak{m}}
\nc{\frakn}{\mathfrak{n}}
\nc{\frako}{\mathfrak{o}}
\nc{\frakp}{\mathfrak{p}}
\nc{\frakq}{\mathfrak{q}}
\nc{\frakr}{\mathfrak{r}}
\nc{\fraks}{\mathfrak{s}}
\nc{\frakt}{\mathfrak{t}}
\nc{\fraku}{\mathfrak{u}}
\nc{\frakv}{\mathfrak{v}}
\nc{\frakw}{\mathfrak{w}}
\nc{\frakx}{\mathfrak{x}}
\nc{\fraky}{\mathfrak{y}}
\nc{\frakz}{\mathfrak{z}}

\nc{\frakA}{\mathfrak{A}}
\nc{\frakB}{\mathfrak{B}}
\nc{\frakC}{\mathfrak{C}}
\nc{\frakD}{\mathfrak{D}}
\nc{\frakE}{\mathfrak{E}}
\nc{\frakF}{\mathfrak{F}}
\nc{\frakG}{\mathfrak{G}}
\nc{\frakH}{\mathfrak{H}}
\nc{\frakI}{\mathfrak{I}}
\nc{\frakJ}{\mathfrak{J}}
\nc{\frakK}{\mathfrak{K}}
\nc{\frakL}{\mathfrak{L}}
\nc{\frakM}{\mathfrak{M}}
\nc{\frakN}{\mathfrak{N}}
\nc{\frakO}{\mathfrak{O}}
\nc{\frakP}{\mathfrak{P}}
\nc{\frakQ}{\mathfrak{Q}}
\nc{\frakR}{\mathfrak{R}}
\nc{\frakS}{\mathfrak{S}}
\nc{\frakT}{\mathfrak{T}}
\nc{\frakU}{\mathfrak{U}}
\nc{\frakV}{\mathfrak{V}}
\nc{\frakW}{\mathfrak{W}}
\nc{\frakX}{\mathfrak{X}}
\nc{\frakY}{\mathfrak{Y}}
\nc{\frakZ}{\mathfrak{Z}}

\nc{\Lie}{\on{Lie}}
\nc{\GL}{\on{GL}}
\nc{\PGL}{\on{PGL}}
\nc{\SL}{\on{SL}}
\nc{\Sp}{\on{Sp}}
\nc{\GSp}{\on{GSp}}
\nc{\SO}{\on{SO}}
\nc{\Or}{\on{O}}
\nc{\gl}{\on{\mathfrak{gl}}}
\rc{\sl}{\on{\mathfrak{sl}}}

\nc{\Mat}{\on{Mat}}
\nc{\Fun}{\on{Fun}}
\nc{\Aut}{\on{Aut}}
\nc{\End}{\on{End}}
\nc{\Hom}{\on{Hom}}
\nc{\Sym}{\on{Sym}}
\nc{\Span}{\on{span}}
\nc{\Irr}{\on{Irr}}
\nc{\Uch}{\on{Uch}}
\nc{\Spec}{\on{Spec}}

\nc{\Ind}{\on{Ind}}
\nc{\Res}{\on{Res}}
\nc{\stab}{\on{stab}}
\nc{\orb}{\on{orb}}
\rc{\ker}{\on{ker}}
\nc{\im}{\on{im}}
\nc{\tr}{\on{tr}}
\nc{\ord}{\on{ord}}
\nc{\rank}{\on{rank}}
\nc{\Tor}{\on{Tor}}
\nc{\Ad}{\on{Ad}}

\nc{\Id}{\on{Id}}
\nc{\Log}{\on{Log}}
\nc{\Exp}{\on{Exp}}
\nc{\Frac}{\on{Frac}}
\nc{\diag}{\on{diag}}
\nc{\D}{\on{D}}

\nc{\St}{\mathrm{St}}
\nc{\triv}{\mathrm{triv}}
\nc{\sgn}{\mathrm{sgn}}
\nc{\reg}{\mathrm{reg}}

\nc{\op}{\mathrm{op}}
\nc{\ad}{\mathrm{ad}}
\rc{\ss}{\mathrm{ss}}
\nc{\HLV}{\mathrm{HLV}}
\nc{\GIT}{\mathrm{GIT}}
\nc{\stack}{\mathrm{stack}}

\setcounter{tocdepth}{1}

\linespread{1}
\allowdisplaybreaks

\begin{document}

\subjclass[2020]{20C33 (primary), 06A07, 16T05}

\maketitle

\begin{abstract}
In this paper, we compute the multiplicities of tensor products of  unipotent almost characters and Deligne–Lusztig characters of a finite reductive group $G^F$, and these multiplicities are related to the ring structure of the complex irreducible characters of $G^F$. In addition, we consider Frobenius--Schur indicators of modules over the Drinfeld doubles of finite reductive groups. In the final section, we study the multiplicities of tensor products of  unipotent almost characters.
\end{abstract}

{
  \hypersetup{linkcolor=black}
  \tableofcontents
}
 \section{Introduction}
 
 Let $G$ be an untwisted connected split reductive group over an algebraically closed field $k$ (with positive characteristic) with the untwisted Frobenius map $F$. 
 When we  consider (complex) irreducible characters of $G^F$ (more generally, finite groups), one fundamental question is about the ring structure of $\widehat{G^F}$, where $\widehat{H}$ is the set of irreducible characters of a finite group $H$.
 This space can be considered as the space spanned by isomorphism classes of finite dimensional complex irreducible representations, and the operations are the direct sum and the tensor product.
With this view-point, our main problem is to compute the multiplicity
 \[
 \left\langle \chi_1\otimes \cdots \otimes \chi_m,\overline{\chi}\right\rangle=\left\langle \chi_1\otimes \cdots \otimes \chi_m\otimes {\chi},1\right\rangle
 \]
 for $\chi_1, \ldots , \chi_m,\chi\in \widehat{G^F}, $ and this gives the decomposition $\chi_1\otimes \cdots \otimes \chi_m = \sum_{\chi \in \widehat{G^F}} \left\langle \chi_1\otimes \cdots \otimes \chi_m\otimes {\chi},1\right\rangle \overline{\chi}$.
 
 We consider this problem over Deligne-Lusztig characters and  unipotent almost characters in this paper. One motivation for studying these tensor products is that some of them can be regarded as a multiplicity variant of the work of \cite{KNP}. In \cite{KNP}, authors considered character varieties over regular unipotent and regular semisimple conjugacy classes, and the main point of the computation is to sum over semisimple characters in induced representations over split semisimple elements in $\check{G}^F$ (here, split semisimple element means that it is contained in a split maximal torus). Recall that the product of the Steinberg character and a Deligne–Lusztig character has vanishing character values except on particular semisimple elements. This provides a situation analogous to the computation in \cite{KNP}. Another intuition will be given in \S\ref{sss:121}.

In addition, we calculate a related term to the size of $G_m(g,z):= \{x\in G^F\,|\, x^m=(gx)^m=z\}$, which is an important term to consider the Frobenius-Schur indicators of the modules over the Drinfeld double $D(G^F)$. We also consider the multiplicities of tensor products involving only  unipotent almost characters, drawing on ideas from Letellier’s work; cf. \cite{letellier2013tensor}.  

 \subsection{Notation} In this paper, $G$ is an  untwisted  split connected reductive group over $k $ and the untwisted Frobenius map $F$. We denote its centre by $Z(G)$ and the set of semisimple (resp. unipotent) elements in $G$ by $G^{ss}$ (resp. $G^{uni}$).
Moreover, we assume that the derived subgroup $[G, G]$ of $G$ is simply connected, and the size $|k^F|=q$ is large enough so that every maximal torus of $G^F$ is non-degenerate, cf. \cite[Proposition 3.6.6]{carter1985finite}.

For each $g\in G$, we have $g=g_sg_u=g_ug_s$ for a semisimple element $g_s$ and a unipotent element $g_u$ with $g_u \in G_{g_s}$ under the Jordan decomposition. Note that if $g\in G^F$, then $g_s,g_u\in G^F$ since the Jordan decomposition is unique.  Let us denote its centraliser subgroup $C_G(g)$ by $G_g$ for $g\in G$, and this is called a  pseudo-Levi subgroup when $g$ is semisimple, cf. \cite[\S2.2]{KNWG}. (Note that every pseudo-Levi subgroup is connected from the assumption that $[G,G]$ is simply connected.) Let $G_{\mathrm{pls}}^F$ denote the set of finite pseudo-Levi subgroups $G_s^F$ of semisimple elements $s$ in $G^F$. For a pseudo-Levi subgroup $\fT$ of $G$ with its maximal torus $T$, $Q_T^\fT$ is the Green function, cf. \cite[Definition 4.1]{DL76} or \cite[Definition 2.2.15]{geck2020character}. For a set $X$ with $G^F$-action, we denote the $G^F$-orbit of $x$ by $[x]$.
 
Let $W(T)$ be the Weyl group of $G$ over a maximal torus $T$, i.e., $N_G(T)/T$ (we drop $G$ when it is obvious) and $R_{T_w}^G(1)$ the Deligne-Lusztig character over a $w$-twisted torus $T_w$ for a split maximal torus $  T_1$.
Recall that $W(T_1)^F=W(T_1)$ since the corresponding automorphism $F$ on $W(T_1)$ is trivial (from the assumption that $G$ is untwisted). We denote the rank of $G$ as $\mathrm{rk}(G)$ and the relative $F$-rank of $T_w$ as $\mathrm{rrk}(T_w)$, cf. \cite[Definition 2.2.11]{geck2020character}.

  \begin{defe}
    For an irreducible character $\chi$ of $W(T_1)$, i.e., $\chi \in \widehat{W(T_1)}$, the corresponding \emph{unipotent almost  character} is 
    \[
    U_\chi \colonequals \frac{1}{|W(T_1)|}\sum_{w\in W(T_1)} \chi(w) R_{T_w}^G(1).\footnote{Note that every unipotent almost  character is a unipotent character when $G=\GL_n$, but this does not hold in general. In addition, recall that $U_{triv}$ is the trivial character $1$ of $G^F$ and $U_{\operatorname{sgn}}$ is the Steinberg character $St$ of $G^F$, where $triv$ is the trivial character and $\operatorname{sgn}$ the sign character of $W(T_1)$.}
    \]
\end{defe}

\subsection{Multiplicity}  \label{intro:multiplicity}
Our first main result is the following.
\begin{thm}\label{thm:main1intro}
Let
$\chi_1, \chi_2, \ldots, \chi_m \in \widehat{W(T_1)}$ and
$\theta_1, \ldots, \theta_n \in \widehat{T_w^F}$ for $w\in W(T_1)$, where $m, n \ge 1$.
Then  we have 
 \begin{equation}\label{eq:mainstregu}
 \begin{split}
&  \left\langle U_{\chi_1}\otimes \cdots \otimes U_{\chi_m}  \otimes R_{T_w}^G(\theta_1)\otimes \cdots \otimes R_{T_w}^G(\theta_n),1\right\rangle\\
= &   \sum_{[\fT^F,u] \in\Xi^{G^F}}
\left(\prod_{i=1}^{m }  \frac{\displaystyle\sum_{\bar{w} \in W(T_1)} \chi_i(\bar{w})  \displaystyle \sum_{\substack{ {v}\in W_{\bar{w}}(\fT) }}Q_{ \tilde{v}^{-1}T_{\bar{w}}\tilde{v} }^{\fT}( u) }{|W(T_1)|}   \right) 
\frac{  1 }{  |G_{{[\fT^F,u]}}^F| } \sum_{\underline{\fT}^F \in [\fT^F]}\frac{ 1}{ |W_w(\fT)| }   \sum_{\substack{ v_i\in W_w(\fT)\\ \text{for }i=1,\ldots , n,\\
u_j\in \underline{\fT}_u^F}}  \prod_{i=1}^nQ_{\tilde{v}_i^{-1}T_w\tilde{v}_i}^{\underline{\fT}}(u_j)\Delta_{\underline{\fT}^F ,\fT'^F}^{\prod_{i=1}^n \dot{v}_i\cdot \theta_i },
\end{split}
\end{equation}
where \begin{enumerate}
  \item[$\bullet$] we sum over $u_j $ each representative of   orbits in $\fT_u^F:=\{u'\in \fT^{uni,F}\,|\, [\fT^F,u']=[\fT^F,u]\text{ in } \Xi^{G^F} \}/\fT^F$ (where  $\fT^{uni,F}$ is the set of unipotent elements in $\fT^F$),
\item[$\bullet$]   ${\Xi}^{G^F}\colonequals \{ (\fT^F,u)\,|\, \fT^F\in G_{pls}^F,\ u\in  \fT^{uni,F}\}/G^F$    with   $ \omega_{G^F}\,:\, G^F\rightarrow \Xi^{G^F} \ \text{given by }  \omega_{G^F}(g)=[G_{g_s}^F, g_u]$,
\item[$\bullet$] $G_{[\fT^F,u]}:=G_{g_{[\fT^F,u]}}$ for any element $g_{[\fT,u]}\in \omega_{G^F}^{-1}([\fT^F,u])$,
\item[$\bullet$] 
 $W_v(\fT):=(W(T_v) /W_{\fT}(T_v))^F$ for $v\in W(T_1)$,\footnote{Here, $W(T_w)/W_{G_{g_s}}(T_w)$ is the quotient space (not a quotient group), and $(W(T_w)/W_{G_{g_s}}(T_w))^F = \{ v W_{G_{g_s}}(T_w) \mid F(v W_{G_{g_s}}(T_w)) = v W_{G_{g_s}}(T_w) \}$.} and $\tilde{v}=\dot{v}\kappa \in N_G(T_w)\fT$,
  \item[$\bullet$] $\mu_w $ is the M\"obius function on the poset $\mathcal{T}_w^{G^F}:=\{ G_s^F\,|\, s\in T_w^F \}$ (with the inclusion partial ordering),
   \item[$\bullet$] $\Delta_{\fT^F,\fT'^F}^{\prod_{i=1}^n \dot{v}_i\cdot \theta_i }:=\sum_{\substack{\fT'^F\in \mathcal{T}_w^{G^F}\\ \fT^F \subset \fT'^F}}\mu_w(\fT^F,\fT'^F)\delta_{\prod_{i=1}^n \dot{v}_i\cdot \theta_i,\fT'}$,
  \item[$\bullet$] 
 $\displaystyle \delta_ {\theta,\fT}:= \begin{cases}
 |  Z(\fT)^F|\quad  &\text{if }  \theta|_{  Z(\fT)^F}=1\\
 0 &\text{otherwise}
 \end{cases}$ for $\theta \in \widehat{T_w^F}.$
\end{enumerate}
\end{thm}
Note that here we used the M\"obius function on the poset $\mathcal{T}_w^{G^F}$. To use this function, in the computation, we only need to consider $\fT^F\in [\fT^F]$ containing $T_w^F$. Therefore, $W_w(\fT)$ and $Q_{\tilde{v}^{-1}T_w\tilde{v}}^{\fT}$ are well-defined. Therefore, for convenience, we consider $Q_{\tilde{v}^{-1}T_w\tilde{v}}^{\fT}$ and $\Delta_{\fT^F,\fT'^F}^{\prod_{j=1}^n \dot{v}_j\cdot \theta_j }$ to be $0$ whenever $\fT$ does not contain $T_w$. The proof is in \S\ref{s:2}.


\begin{rem}
Let us take $\theta_1, \ldots, \theta_n \in \widehat{T_w^F}$ such that no $\theta_i$ is fixed by any non-trivial element of $W(T_w)^F$; such elements are said to be \emph{in general position}, cf. \cite[\S7.3]{carter1985finite}. It is then well known that each character
$\epsilon_G \epsilon_{T_w} R_{T_w}^G(\theta_i)$
is irreducible, where $\epsilon_G = (-1)^{\mathrm{rk}(G)}$ and $\epsilon_{T_w} = (-1)^{\mathrm{rrk}(T_w)}$ (cf.\ \cite[Corollary 2.2.9]{geck2020character}). We refer to such character $R_T^G(\theta)$ (i.e., $\theta$ in general position) as a {regular semisimple character}. Thus, when each $U_{\chi_j}$ is a unipotent character  and each $\theta_i$ is in general position, our result contributes to the computation of the multiplicities of the corresponding  irreducible representations. (Note that there is a discussion regarding the relation between unipotent almost  and actual unipotent characters at MathOverflow \cite{MOflow}.) In addition, our formula can be simplified when $T_w=T_1$ from Remark \ref{rem:split-}. Furthermore, when every unipotent almost   character is trivial and $\theta_1,\ldots, \theta_n$ are split, generic and in general position, the multiplicity can be written by a sum of the size of generic additive character varieties, cf. \cite[Theorem 31]{Nam}. (We can recover \cite[\S4.2]{Nam} from our formula in Theorem \ref{thm:main1intro}.) 
\end{rem}

\subsubsection{Motivation}\label{sss:121}
In \cite{KNP}, the authors have computed the size of the character variety $\{(a_1,b_1,\ldots , a_g,b_g, \\ c_1, \ldots , c_n)\in G^{2g}\times \prod_{i=1}^n C_i \,|\, \prod_{i=1}^g [a_i,b_i]\prod_{j=1}^n c_j=1\}/\!\!/ G$ over a finite field, where $C_1, \ldots, C_n$ are conjugacy classes of regular semisimple or regular unipotent element, with classes of both types present. Then, following the approach in \cite{KNWG}, it becomes a natural question to investigate the corresponding additive analogue
$A_\fg:= \{(X_1,Y_1,\ldots , X_g,Y_g,Z_1, \ldots , Z_n)\in \fg^{2g}\times \prod_{i=1}^n O_i \,|\, \sum_{i=1}^g [X_i,Y_i]+\sum_{j=1}^n Z_j=0\}/\!\!/ G$, where $\fg$ is the Lie algebra of $G$, $O_1, \ldots , O_n$ are adjoint orbits in  $\fg$  of regular semisimple or the closure of regular nilpotent elements, with classes of both types present. Observe that, in the case \(G = \GL_n\), the cardinality of $A_{\mathfrak{g}}$ is related to the multiplicity of the tensor product of irreducible characters of \(\GL_n^F\); see \cite[Equations (1.3.4) and (1.4.1)]{HLRV}.

Let us explain a motivation from the above work for the idea of this paper by considering the Steinberg character and a regular semisimple character $R_T^G(\theta)$. It is a natural idea to relate a regular semisimple conjugacy class with regular semisimple character. Furthermore, if we take the closure of the regular nilpotent orbit in $A_\fg$, then its Fourier transform is related to the Steinberg character from \cite{springer1980steinberg} or \cite[Proposition 3.6]{lehrer1996space} (and recall that the size $|A_\fg^F|$ can be computed using the Fourier transform). Therefore, it is natural to consider the tensor product with the Steinberg character and a regular semisimple character. Starting from this idea, we study the generalised problem of determining the multiplicities of Deligne--Lusztig characters and unipotent almost   characters.

\begin{rem}Let us consider the additive character variety
$A_\fg:= \{(X_1,Y_1,\ldots , X_g,Y_g,Z_1, \ldots , Z_n)\in \fg^{2g}\times \prod_{i=1}^n O_i \,|\, \sum_{i=1}^g [X_i,Y_i]+\sum_{j=1}^n Z_j=0\}/\!\!/ G$ such that $O_1, \ldots , O_n$ are adjoint orbits in  $\fg$  of strongly generic regular semisimple\footnote{Here, strongly means that the centraliser of an element of $O_i$ is connected, and for definition of generic, please see \cite[Definition 11]{Nam}.} or the closure of regular nilpotent elements, with classes of both types present. Then its size $|A_\fg^F|$ can be computed using the Fourier transform on the class functions on $\fg^F$. Briefly, we need to compute the sum
\[
\frac{|Z(G)^F||\fg^F|^{g-1}}{|G^F|} \sum_{x\in \fg^F} |\fg_x^F|^{g} \prod_{i=1}^n \mathcal{F}(1_{O_i}^G)(x),
\]
where $\fg_x$ is the centraliser of $x$ in $\fg$.
Following \cite{KNWG} together with \cite[Proposition 3.6]{lehrer1996space}, this sum can be computed explicitly. The precise value is left to the reader.
\end{rem}

 \bigskip

 \subsection{Frobenius-Schur indicators of the modules over the Drinfeld	double }\label{ss:12} The higher Frobenius–Schur indicators of modules over semisimple Hopf algebras form an interesting topic in Hopf algebras, for example, \cite{IMM, KSZ}. Furthermore, for a finite group $H$, we can consider the Frobenius-Schur indicators of the modules over the Drinfeld
double $D(H)$ of a finite group $H$ via the group algebra $\mathbb{C}[H]$. A key ingredient of this indicator is the cardinality of the set
\[
G_m(g,z):= \{x\in H\,|\, x^m=(gx)^m=z\} \quad \text{for }g,z\in H\text{ and } m\in \mathbb{Z} 
\]
from \cite[\S3]{sch}.

Motivated by this, our next objective is to compute its size by decomposing over $\widehat{G^F}$.
However, it is a difficult problem to treat arbitrary $m$ and $z$.
For this reason, we restrict our attention to the case $$  (m,z)=(|G^F|_{p'},1),$$ where $p$ is the characteristic of $k$. In other words, $|G^F|_{p'}=|G^F|/q^{|\Phi^+|}$ where $\Phi^+$ is the set of positive roots of $G$. 
Then we can get the following result, and this is proved in \S\ref{s:3}.
 \begin{thm} 
For each $g\in G^F$, we have that
\[
| G_{|G^F|_{p'}}(g,1) |= 
  |G^F| \sum_{\chi \in \widehat{G^F}} \frac{\chi(g)}{\chi(1)} \left|  \sum_{[\fT^F ]\in \mathbb{T}^{G^F}}\frac{1}{|G^F| |\fT^F|_p }  \sum_{(T,\theta)}\frac{\epsilon_{\fT}\epsilon_T\left\langle R_T^G(\theta),\chi\right\rangle   }{|(W(T) /W_\fT(T))^F|}\sum_{\underline{\fT}^F\in [\fT^F]}\sum_{\substack{\fT'^F\in  \mathcal{T}_T^{G^F}\\ \underline{\fT}^F\subset \fT'^F}} \mu_T(\underline{\fT}^F,\fT'^F)\delta_{\theta,\fT'^F}\right|^2,\]
  where $(T,\theta)$ runs over all pairs $(T,\theta)$ such that $T\subset G$  is an $F$-stable maximal torus and $\theta\in \widehat{T^F}$, $\mathbb{T}^{G^F}:= \{ [G_s^F]\,|\, s\in G^{ss,F}\}$ and $\mathcal{T}_T^{G^F}:=\{ G_s^F\,|\, s\in T^F\}$ with the M\"obius function $\mu_T$.    \end{thm}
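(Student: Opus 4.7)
The plan proceeds in three stages: (i) interpret $G_{|G^F|_{p'}}(g,1)$ through the characteristic function of $G^{ss,F}$; (ii) compute the resulting convolution using Schur orthogonality on matrix coefficients; and (iii) evaluate the Fourier coefficients of $\mathbf{1}_{G^{ss,F}}$ via the Deligne--Lusztig character formula, following the combinatorics of the proof of Theorem \ref{thm:main1intro}.

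Since $m = |G^F|_{p'}$ is the $p'$-part of $|G^F|$, the condition $x^m = 1$ is equivalent to $x$ being a $p'$-element of $G^F$, i.e.\ $x \in G^{ss,F}$. Hence
\[
|G_{|G^F|_{p'}}(g,1)| \;=\; \sum_{x \in G^F} f(x)\, f(gx), \qquad f := \mathbf{1}_{G^{ss,F}}.
\]
Because $G^{ss,F}$ is a union of conjugacy classes, $f$ is a class function; expand $f = \sum_{\chi \in \widehat{G^F}} c_\chi\,\chi$ with $c_\chi := \langle f, \chi\rangle$. Since $x \mapsto x^{-1}$ stabilises $G^{ss,F}$ and $\overline{\chi(x)} = \chi(x^{-1})$, I would argue that $c_\chi = c_{\overline{\chi}} \in \mathbb{R}$. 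For the Schur-type identity, expanding $\chi_2(gx) = \sum_{i,j}\rho_2(g)_{ij}\rho_2(x)_{ji}$ and invoking orthogonality of matrix coefficients yields
\[
\sum_{x \in G^F} \chi_1(x)\,\chi_2(gx) \;=\; \delta_{\chi_1,\overline{\chi_2}}\,\frac{|G^F|\,\chi_2(g)}{\chi_2(1)},
\]
and substituting the expansion of $f$ twice into $\sum_x f(x)f(gx)$, together with $c_\chi = c_{\overline{\chi}}$, gives
\[
|G_{|G^F|_{p'}}(g,1)| \;=\; |G^F| \sum_{\chi \in \widehat{G^F}} \frac{c_\chi^2\,\chi(g)}{\chi(1)}.
\]

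It remains to identify $c_\chi$ with the parenthesised expression of the statement. Grouping semisimple conjugacy classes $[s]$ by the $G^F$-class $[\fT] = [G_s] \in \mathbb{T}^G$ of their pseudo-Levi centraliser, one has
\[
c_\chi \;=\; \sum_{[s] \subset G^{ss,F}} \frac{\chi(s)}{|G_s^F|} \;=\; \sum_{[\fT]\in \mathbb{T}^G} \frac{1}{|\fT^F|}\sum_{\substack{[s]\\ G_s=\fT}} \chi(s).
\]
I would then insert the Deligne--Lusztig character formula at a semisimple element,
\[
\chi(s) \;=\; \frac{1}{|\fT^F|}\sum_{T \subset \fT} |T^F|\sum_{\theta \in \widehat{T^F}} \langle R_T^G(\theta),\chi\rangle\,\theta(s)\,Q_T^{\fT}(1),
\]
where $T$ runs over $F$-stable maximal tori of $\fT$, and reindex over $G^F$-conjugacy classes of pairs $(T,\theta)$ to introduce the factor $|W_\fT(T)^F|/|W(T)^F|$. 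The identity $Q_T^{\fT}(1) = \epsilon_\fT\epsilon_T\,|\fT^F|_{p'}/|T^F|$ then supplies the signs $\epsilon_\fT\epsilon_T$ and the Sylow factor $1/|\fT^F|_p$, while the passage from ``$s \in T^F$'' to ``$G_s = \fT$'' is carried out by the same M\"obius inversion over $\mathcal{T}_T^G$ used in the proof of Theorem \ref{thm:main1intro}, producing the factors $\mu_T(\fT,\fT')$ and $\delta_{\theta,\fT'}$.

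The principal obstacle is the combinatorial bookkeeping --- reconciling $G^F$- versus $\fT^F$-conjugacy orbits of maximal tori, the inner versus outer Weyl groups, and the placement of the Deligne--Lusztig signs --- so that both copies of the bracket combine into $c_\chi^2$ rather than $|c_\chi|^2$ with a possible sign loss; this is guaranteed by the reality of $c_\chi$ established in Stage (i). Apart from this bookkeeping, the remaining work reduces to the same M\"obius/Weyl-group combinatorics already handled in Theorem \ref{thm:main1intro}.
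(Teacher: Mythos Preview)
Your proposal is correct and follows essentially the same route as the paper. The paper cites Schauenburg's decomposition $\gamma_m^1 = |G^F|\sum_\chi |\langle \chi, w_m^1\rangle|^2\,\chi/\chi(1)$ (which your Stages~(i)--(ii) rederive directly via Schur orthogonality, including the reality of $c_\chi$), identifies $w_{\mathfrak{q}}^1$ with $\mathbf{1}_{G^{ss,F}}$ exactly as you do, and then evaluates $\langle \chi, w_{\mathfrak{q}}^1\rangle$ by inserting the Deligne--Lusztig formula $\chi(s)=|G_s^F|_p^{-1}\sum_{(T,\theta)}\epsilon_{G_s}\epsilon_T\langle R_T^G(\theta),\chi\rangle\,\theta(s)$ and applying the same M\"obius inversion over $\mathcal{T}_T^G$ that you invoke.
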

  Note that the term $\left\langle R_T^G(\theta), \chi \right\rangle$ is an integer, and this term is studied by Lusztig, cf. \cite{lusztig}. 
Moreover, when $\theta \in \widehat{T_1^F}$, this value can be computed using the double centraliser theorem, for example, \cite{HL2,eti}.

 \bigskip 
 \subsection{Multiplicity of unipotent almost   characters} From the work of Letellier \cite{letellier2013tensor}, it is an interesting topic to consider $\left\langle U_{\chi_1}\otimes \cdots \otimes U_{\chi_m},1 \right\rangle$ for irreducible characters $\chi_1, \ldots , \chi_m$ of $W(T_1)$.  Recall that unipotent characters of $G^F$ are essential materials to study irreducible characters of $G^F$. However, for a general finite reductive group, unipotent characters remain mysterious to compute every character value. With this observation, we consider unipotent almost   characters, whose every character value is well-known.  The following is the last result of this paper.
 \begin{thm}
 Let $\chi_1, \ldots, \chi_m \in \widehat{W(T_1)}$. Then we have 
 \[
 \left\langle U_{\chi_1}\otimes \cdots \otimes U_{\chi_m},1\right\rangle =\frac{1}{|G^F||W(T_1)|^m} \sum_{\xi=[\fT^F,u] \in \Xi^{G^F}}  |\omega_{G^F}^{-1}(\xi)|\prod_{i=1}^m\left(\sum_{w\in W(T_1)} \chi_i(w)\sum_{v\in W_{w}(\fT)}Q_{{\tilde{v}^{-1}T_{w }\tilde{v}}}^{\fT}(u)\right).
\]
\end{thm}
This is proved in \S\ref{s:4}.

 \bigskip 
  
\section{Multiplicity of Deligne-Lusztig characters and unipotent almost   characters}\label{s:2}

In this section, we compute the multiplicity to prove Theorem \ref{thm:main1intro}:
\begin{equation}\label{eq:2first}
\left\langle U_{\chi_1}\otimes \cdots \otimes U_{\chi_m}\otimes   R_{T_w}^G(\theta_1)\otimes \cdots \otimes R_{T_w}^G(\theta_n),1\right\rangle =\frac{1}{|G^F|} \sum_{g\in G^F} U_{\chi_1}(g) \cdots   U_{\chi_m}(g)  R_{T_w}^G(\theta_1)(g)\cdots R_{T_w}^G(\theta_n)(g),
\end{equation}
where $w \in W(T_1)$,  $\theta_1,\ldots , \theta_n \in \widehat{T_w^F}$, and
$\chi_1 , \ldots, \chi_m \in \widehat{W(T_1)}$.  

\bigskip
\subsection{Types}\label{ss:types}
We define types of elements in $G^F$ in order to decompose Equation~\eqref{eq:2first} into simpler terms. 
Then we may consider pairs $(\mathfrak{T}^F, u)$, where $\mathfrak{T} \in G_{\mathrm{pls}}^F$ (recall that $G_{\mathrm{pls}}^F$ is the set of finite centraliser subgroups $G_s^F$ of semisimple elements $s$ in $G^F$) and $u$ is a unipotent element of $\mathfrak{T}^F$.  Recall that we defined
  ${\Xi}^{G^F}\colonequals \{ (\fT^F,u)\,|\, \fT^F\in G_{pls}^F,\ u \in \fT^{uni,F}\ \}/G^F$ and $ \omega_{G^F}\,:\, G^F\rightarrow \Xi^{G^F} \ \text{given by }  \omega_{G^F}(g)=[G_{g_s}^F, g_u].$

Clearly, $\Xi^{G^F}$ is a finite set. Up to $G^F$-conjugation, every pseudo-Levi subgroup in $G_{pls}^F$ is uniquely determined by its complete root datum with respect to an $F$-stable maximal torus of $G$ (cf. \cite[Definition 1.6.10]{geck2020character}). Furthermore, the number of unipotent conjugacy classes in any such reductive subgroup is finite. Because both the number of $G^F$-conjugacy classes of $F$-stable pseudo-Levi subgroups and the number of unipotent conjugacy classes within them depend only on the underlying root system and Weyl group, the cardinality of the set $\Xi^{G^F}$ is independent of $q$ (except for small $q$).

\bigskip 
\subsection{Vanishing values} \label{ss:22}
From \cite[Theorem 2.2.16 and Example 2.2.17 (a)]{geck2020character}, it is known that $R_{T_w}^G(\theta)(g)=0$ whenever $g_s$ is not conjugate in $G^F$ to any element of $T_w^F$. Therefore, our problem is reduced to compute the following:
\begin{equation*}\label{eq:multioverT}
\begin{split}
 \left\langle U_{\chi_1}\otimes \cdots \otimes U_{\chi_m}\otimes   R_{T_w}^G(\theta_1)\otimes \cdots \otimes R_{T_w}^G(\theta_n),1 \right\rangle &=\frac{1}{|G^F|} \sum_{\substack{g\in G^F\\\text{s.t. } hg_sh^{-1}\in {T_w^F}\\ \text{for some } h\in G^F}} U_{\chi_1}(g) \cdots   U_{\chi_m}(g)   R_{T_w}^G(\theta_1)(g)\cdots R_{T_w}^G(\theta_n)(g).
\end{split}
\end{equation*}
Thus, in this section we only need to consider  those elements $g \in G^F$ whose semisimple part $g_s$ is $G^F$-conjugate to an element of $T_w^F$, and we denote by $\Xi_w^{G^F}$ the subset of such types in $\Xi^{G^F}$, i.e.,  $[\fT^F,u]\in \Xi_w^{G^F}$ if and only if $\fT$ contains $T_w$ up to $G^F$-conjugation with \cite[Proposition 3.5.2]{carter1985finite}.

\bigskip 
  \subsection{Computation} 
Consider the set
$\mathbb{T}_w^{G^F} := \{\, [G_s^F] \mid s \in T_w^F \,\},$
where $[G_s^F]$ denotes the $G^F$-conjugacy class of the centraliser $G_s^F$.
The set $\mathbb{T}_w^{G^F}$ is finite 
and independent of $q$ (except for small $q$), since each element of $\mathbb{T}_w^{G^F}$ is determined by its complete root datum. 
We now define a map
$$\tau_w:=\tau_{\mathbb{T}_w^{G^F}}\,:\,  T_w^F \rightarrow \mathbb{T}_w^{G^F}\quad  \text{given by}\quad \tau_{w}(s)=[G_s^F].$$


\subsubsection{Character values}   Let us consider values $U_\chi(g)$ and $R_{T_w}^G(\theta)(g)$ for an element $g \in G^F$.
\begin{thm}\label{lem:almostunipotentvalue}
For an element $g = g_s g_u = g_u g_s \in G^F$ and a character $\chi \in \widehat{W(T_1)}$, we have
\begin{equation*}\label{eq:prop6}
U_\chi(g) = \frac{1}{|W(T_1)|} \sum_{w \in W(T_1)} \chi(w) R_{T_w}^G(1)(g) = \frac{1}{|W(T_1)|} \sum_{w \in W(T_1)} \chi(w)  \sum_{\substack{\tilde{v}\in (W(T_w)  /W_{G_{g_s}}(T_w))^F }}Q_{ \tilde{v}^{-1}T_w\tilde{v} }^{G_{g_s}}(g_u).
\end{equation*}
  Since the Deligne--Lusztig character value $R_{T_w}^G(1)(g)$ vanishes whenever $g_s$ is not $G^F$-conjugate to an element of $T_w^F$ (cf. \cite[Example 2.2.17 (a)]{geck2020character}), we may restrict the summation over $w \in W(T_1)$ to those elements for which $g_s \in T_w^F$ up to $G^F$-conjugation. For notation, please see Lemma \ref{lem:decompositionofcen} and  Proposition \ref{thm:charactervalue}.
   \end{thm}
Theorem \ref{lem:almostunipotentvalue} implies that  $U_\chi(g_1)=U_\chi(g_2)$ whenever $\omega_{G^F}(g_1)=\omega_{G^F}(g_2)$ for $ g_1,g_2\in  G^F$ since $U_\chi$ is a $G^F$-class function. So let us denote the value $\displaystyle \frac{1}{|W(T_1)|} \sum_{w \in W(T_1)} \chi(w)  \sum_{\substack{\tilde{v}\in (W(T_w)  /W_{G_{g_s}}(T_w))^F }}Q_{ \tilde{v}^{-1}T_w\tilde{v} }^{G_{g_s}}(g_u)  
$ by $$\displaystyle \frac{1}{|W(T_1)|} \sum_{w \in W(T_1)} \chi(w)  \sum_{\substack{\tilde{v}\in (W(T_w)  /W_{\fT}(T_w))^F }}Q_{ \tilde{v}^{-1}T_w\tilde{v} }^{\fT}( u)  
$$ via $\omega_{G^F}(g)=[\fT^F,u]$.
Theorem  \ref{lem:almostunipotentvalue} follows from the following lemma and proposition.

\begin{lem}\label{lem:decompositionofcen}

    For a semisimple element $s$ in $T_w^F$, we have $$\{ x\in G^F \,|\, x^{-1}sx\in T_w^F\}=\underset{v\in (W(T_w)  /W_{G_s}(T_w))^F }{\sqcup} ( \tilde{v} G_s^F)^{-1}, $$ 
     where $\tilde{v}= \dot{v}\kappa\in G^F$ for $\kappa\in G_s$ and $\dot{v}$ is a representative of $v  $ in $N_{G }(T_w)$, and $H^{-1}$ is the set of inverses of elements     of $ H$.
\end{lem}
From now on, we use $\tilde{v}$ and $\dot{v}\kappa$ for $v\in (W(T_w)/W_\fT(T_w))^F$ in this paper without recalling.
\begin{proof}
Let us show that $\{ x\in G^F \,|\, x^{-1} sx \in T_w^F\}\subseteq \underset{v\in (W(T_w)   /W_{G_s}T_w)^F }{\sqcup} (\tilde{v} G_s^F )^{-1}.$ Since $x^{-1}sx \in T_w^F\subset T_w$, we can check that $x T_wx^{-1}, T_w\subset G_s$. Then there exists $h\in G_s$ such that $hxT_wx^{-1}h^{-1}=T_w$. Note that we have $F(x^{-1}h^{-1}shx )=F(x^{-1}sx )=x^{-1}sx =x^{-1}h^{-1}shx $. This implies that $(x^{-1}h^{-1})^{-1}F(x^{-1}h^{-1})sF(hx )(hx )^{-1}=s$, and so we get that $(x^{-1}h^{-1})^{-1}F(x^{-1}h^{-1})\in G_s$. Since $(hx)^{-1}=x^{-1}h^{-1}\in N_G(T_w)$, we have
\[
(x^{-1}h^{-1})^{-1}F(x^{-1}h^{-1})\in N_{G_s}(T_w).
\]
From this, we can check the following: $F(x^{-1}h^{-1}N_{G_s}(T_w))=F(x^{-1}h^{-1}) N_{G_s}(T_w)=x^{-1}h^{-1}N_{G_s}(T_w)$. Furthermore, by quotient with $T_w$, we have that
\[
F(x^{-1}h^{-1}W_{G_s}(T_w))=x^{-1}h^{-1}W_{G_s}(T_w)\Rightarrow x^{-1}h^{-1}\in (W(T_w)/W_{G_s}(T_w))^F.
\]
(For convenience, here we use $x,h$ as the image of $x,h $ in $W(T_w)$.)
From $(x^{-1}h^{-1})^{-1}F(x^{-1}h^{-1})\in G_s$, we can  find $h'\in G_s$ such that $x^{-1}h^{-1}h' \in G^F$ via Lang's theorem. Then it is easy to see that $h^{-1}h'\in G_s^F$ due to $x\in G^F$. Since $x^{-1},h^{-1}h'\in G^F$, we can consider the following (with abuse of notation) 
\[
x^{-1}h^{-1}h'\in (W(T_w)/W_{G_s}(T_w))^F\cdot G_s\Rightarrow x^{-1}=\dot{v}\kappa h'^{-1}h\in (W(T_w)/W_{G_s}(T_w))^F\cdot G_s \cdot G_s^F,
\]
where $\dot{v}\kappa\in G^F$ (with $\dot{v}\in N_{G }(T_w)$ and $\kappa \in G_s$).
Note that if two distinct elements $x $ and $y$ are in the same coset, then we can choose same representative $\dot{v}$. This is because $\dot{v}_xW_{G_s}(T_w)=\dot{v}_yW_{G_s}(T_w)$, then $\dot{v}_x=\dot{v}_y m $ for some $m\in G_s$.
 Therefore, by taking $\tilde{v}$ as $\dot{v}\kappa$, we are done.

To show the converse inclusion, let us consider $\tilde{v}h=\dot{v}uh$ for some $h\in G_s^F$. Then it is easy to check that
\[
\dot{v}\kappa h s (\dot{v}\kappa h)^{-1}=\dot{v}s\dot{v}^{-1}.
\]
From the condition on ${v}$, we have $\dot{v}^{-1}F(\dot{v})\in N_{G_s}(T_w)$, and so we have $F(\dot{v})=\dot{v}m$ for some $m\in N_{G_s}(T_w)$. Then we have
\[
F(\dot{v}s\dot{v}^{-1})=F(\dot{v})sF(\dot{v})^{-1}=\dot{v}msm^{-1}\dot{v}^{-1}=\dot{v}s\dot{v}^{-1}.
\]
Therefore, we can conclude that $x^{-1}sx=\dot{v}\kappa h s (\dot{v}\kappa h)^{-1}=\dot{v}s\dot{v}^{-1}\in T_w^F$, and so this finishes the proof.
\end{proof}

 \begin{prop}\label{thm:charactervalue}
    For   $\theta \in \widehat{T_w^F}$, we have
   \[
    \begin{split}
    R_{T_w}^G(\theta)(g) = \sum_{\substack{ {v}\in (W(T_w)  /W_{G_{g_s}}(T_w))^F }}Q_{ \tilde{v}^{-1}T_w\tilde{v} }^{G_{g_s}}(g_u)\theta(\dot{v}g_s\dot{v}^{-1}).
    \end{split}
    \]
\end{prop}
Note that $\tilde{v}^{-1}T_w\tilde{v}$ is $G^F$-conjugate to $T_w$ (since $\tilde{v}\in G^F$), but it may not be $G_{g_s}^F$-conjugate to $T_w$. For example, let us consider $G^F:=\mathrm{GL}_4^F$ and $\fT^F=\mathrm{GL}_2^F\times \mathrm{GL}_2^F=C_{G^F}(\mathrm{diag}(a,a,b,b))$. Then two non-split maximal tori $T_{(1,2)}$ and $T_{(3,4)}$ are $G^F$-conjugate, but they are not over $\fT^F$.

\begin{proof}We have 
\[    \begin{split}
    R_{T_w}^G(\theta)(g)&=\frac{1}{|G_{g_s}^F|}\sum_{\substack{x\in G^F \text{ s.t.}\\ x^{-1}g_sx\in T_w^F}}Q_{xT_wx^{-1}}^{G_{g_s}}(g_u)\theta(x^{-1}g_sx)\\
    &=\frac{1}{|G_{g_s}^F|}\sum_{\substack{x\in \underset{v\in (W(T_w)  /W_{G_{g_s}}(T_w))^F }{\sqcup} ( \tilde{v} G_{g_s}^F)^{-1}}}Q_{ x T_wx^{-1} }^{G_{g_s}}(g_u)\theta(x^{-1}g_sx)
    \\
    &= \sum_{\substack{ {v}\in (W(T_w)  /W_{G_{g_s}}(T_w))^F }}Q_{ \tilde{v}^{-1}T_w\tilde{v} }^{G_{g_s}}(g_u)\theta(\dot{v}g_s\dot{v}^{-1}),
    \end{split}
    \]
    where the first equality is \cite[Theorem  2.2.16]{geck2020character} and the second equality is Lemma \ref{lem:decompositionofcen}.
    Let us show the last equality. Let us take any two distinct elements $x$ and $y$ in $(\tilde{v}G_{g_s}^F)^{-1}$, and each has decomposition $\dot{v}u_x h_x$ and $\dot{v}u_yh_y$ for $u_x,u_y\in G_{g_s}$ and $h_x,h_y\in G_{g_s}^F$.
  (Recall that we can assume that $x$ and $y$ have same representative $\dot{v}$ from the proof of Lemma \ref{lem:decompositionofcen}.) Then our goal is to show that $Q_{h_x^{-1}u_x^{-1}\dot{v}T_w \dot{v}u_xh_x}^{G_{g_s}}=Q_{h_y^{-1}u_y^{-1}\dot{v}T_w \dot{v}u_yh_y}^{G_{g_s}}.$ From \cite[Definition 2.2.15]{geck2020character} and $\dot{v}\in N_G(T_w)$, our problem is reduced to prove the following equality
    $$Q_{u_x^{-1} T_w u_x}^{G_{g_s}}=Q_{u_y^{-1} T_w  u_y }^{G_{g_s}}.$$
    Now, let us consider 
    \[
    xy^{-1}=h_x^{-1}u_x^{-1}\dot{v}\dot{v}^{-1}u_yh_y=h_x^{-1}u_x^{-1}u_yh_y.
    \]
    From $xy^{-1},h_x,h_y\in G^F$, we can get that $u_x^{-1}u_y\in G_s^F$. Then we have the following:
    $$Q_{u_x^{-1} T_w u_x}^{G_{g_s}}=Q_{u_y^{-1}u_xu_x^{-1} T_w u_xu_x^{-1}u_y}^{G_{g_s}}=Q_{u_y^{-1} T_w  u_y }^{G_{g_s}}$$
    using the property in \cite[Definition 2.2.15]{geck2020character} again. Therefore, we are done.
 \end{proof}

\begin{proof}[Proof of Theorem \ref{lem:almostunipotentvalue}]
Note that $R_{T_w}^G(1)(g)=0$ if $g_s$ is not in $T_w^F$ up to $G^F$-conjugation. Therefore, let us assume that $g_s$ is $G^F$-conjugate to an element in $T_w^F$. Without   loss of generality,  we can assume that $g_s\in T_w^F$ since $R_{T_w}^G(1)$ is a class function on $G^F$, i.e., $U_\chi$ is also a $G^F$-class function.
Then we have 
\[\begin{split}
\frac{1}{|W(T_1)|}\sum_{w\in W(T_1)} \chi(w) R_{T_w}^G(1)(g)&=\frac{1}{|W(T_1)|}\sum_{w\in W(T_1)} \chi(w) \sum_{\substack{\tilde{v}\in (W(T_w)  /W_{G_{g_s}}(T_w))^F }}Q_{ \tilde{v}^{-1}T_w\tilde{v} }^{G_{g_s}}(g_u)1(\dot{v}g_s\dot{v}^{-1})\\
&=\frac{1}{|W(T_1)|}\sum_{w\in W(T_1)} \chi(w) \sum_{\substack{\tilde{v}\in (W(T_w)  /W_{G_{g_s}}(T_w))^F }}Q_{ \tilde{v}^{-1}T_w\tilde{v} }^{G_{g_s}}(g_u). 
\end{split}
\]
where the first equality comes from Proposition \ref{thm:charactervalue}.
\end{proof}
\begin{rem}\label{rem:split-}
When $T_w$ is split, i.e., $w=1$,   we have $\{ x\in G^F \,|\, xsx^{-1}\in T_w^F\}=\underset{v\in W(T) /W_{G_s}(T) }{\sqcup}\dot{v} G_s^F  $ with $\dot{v}\in N_G(T)^F$ since the Frobenius action on $W(T)$ and  $W_{G_s}(T)$ are trivial. Then this implies that $R_{T_1}^G(\theta)(g)= \frac{Q_{ T_1 }^{G_{g_s}}(g_u)}{|W_{G_{g_s}}(T_1)|}\sum_{\substack{v\in W(T_1) }}\theta(\dot{v}g_s\dot{v}^{-1})$.
 
\end{rem}

\subsubsection{$n=1$ case} 
With Theorem \ref{lem:almostunipotentvalue}, we have the following:
\begin{equation}\label{eq:222}\begin{split}
&\left\langle U_{\chi_1}\otimes \cdots \otimes U_{\chi_m}\otimes   R_{T_w}^G(\theta),1\right\rangle \\=&\frac{1}{|G^F|} \sum_{[\fT^F,u] \in \Xi_w^{G^F}}
\left(\prod_{i=1}^{m }  \frac{\displaystyle\sum_{{\bar{w}} \in W(T_1)} \chi_i({\bar{w}})  \displaystyle \sum_{\substack{\tilde{v}\in (W(T_{\bar{w}})  /W_{\fT}(T_{\bar{w}}))^F }}Q_{ \tilde{v}^{-1}T_{\bar{w}}\tilde{v} }^{\fT}( u) }{|W(T_1)|}   \right)  
\sum_{\left\{ hgh^{-1} \,\middle\vert\, \substack{
h\in G^F,\ g\in \omega_{G^F}^{-1}([\fT^F,u])\\\text{s.t. } g_s\in \tau_w^{-1}([\fT^F])
}\right\}}   R_{T_w}^G(\theta)(hgh^{-1}),
\end{split}
\end{equation}
where ${\left\{ hgh^{-1} \,\middle\vert\, \substack{
h\in G^F,\ g\in \omega_{G^F}^{-1}([\fT^F,u])\\\text{s.t. } g_s\in \tau_w^{-1}([\fT^F])
}\right\}} $ is the set of all elements $G^F$-conjugate to $g$ such that the semisimple part of $g$   is in $ \tau_w^{-1}([\fT^F])$.

Now, our problem is reduced to compute the sum $\displaystyle \sum_{\left\{ hgh^{-1} \,\middle\vert\, \substack{
h\in G^F,\ g\in \omega_{G^F}^{-1}([\fT^F,u])\\\text{s.t. } g_s\in \tau_w^{-1}([\fT^F])
}\right\}}   R_{T_w}^G(\theta)(hgh^{-1})$. Since $R_{T_w}^G(\theta)$ is a class function (over $G^F$-conjugation),  it suffices to sum over representatives of elements in  $\tau_w^{-1}([\fT^F]) / W (T_w)$, i.e., the representatives of the set of conjugacy classes of every element $g_s$ in $\tau_w^{-1}([\fT^F])$. Then we have the following reduction:§
\[
\begin{split}
\sum_{\left\{ hgh^{-1} \,\middle\vert\, \substack{
h\in G^F,\ g\in \omega_{G^F}^{-1}([\fT^F,u])\\\text{s.t. } g_s\in \tau_w^{-1}([\fT^F])
}\right\}}  R_{T_w}^G(\theta)(hgh^{-1})&=\frac{|G^F|}{|G_{g}^F|} 
\sum_{\substack{u_j\in \fT_u^F}} \sum_{\substack{
 g_s\in \tau_w^{-1}([\fT^F])  / W (T_w) }}  R_{T_w}^G(\theta)(g_su_j),
\end{split}
\]
where we sum over $u_j $ each representative of   orbits in $\fT_u^F$, and the term $ \tau_w^{-1}([\fT^F])  / W (T_w)$ is an equivalence relation coming from $W(T_w)$-action on $T_w$.

We can show  this by proving that a product map $\tau_w^{-1}([\fT^F])/W(T_w) \times \fT_u^F \rightarrow   \left\{ hgh^{-1} \,\middle\vert\, \substack{
h\in G^F,\ g\in \omega_{G^F}^{-1}([\fT^F,u])\\\text{s.t. } g_s\in \tau_w^{-1}([\fT^F])
}\right\}/G^F$ is bijective, where we choose representatives of elements in $\tau_w^{-1}([\fT^F])/W(T_w) $ satisfying that its centraliser is $\fT$. (Recall that if two elements in $T$ are conjugate, they are actually $W(T)$-conjugate.)  Surjectivity is obvious from the Jordan decomposition, and so let us show that this is injective. If $ysu_jy^{-1}=su_j$ for some $y\in G^F$, then $y\in \fT^F$. This implies that this product map is injective. 
Then from   Lemma \ref{lem:decompositionofcen} and Proposition \ref{thm:charactervalue} , we have  
\begin{equation}\label{eq:DLcharacter-with-semisimple}
\begin{split}
R_{T_w}^G(\theta)(g_su_j)&=\sum_{v\in  (W(T_w) /W_{G_{g_s}}(T_w))^F }Q_{\tilde{v}^{-1}T_w\tilde{v}}^{\fT}(u_j)\theta((\dot{v}\kappa h)g_s(\dot{v}\kappa h)^{-1})\\
&=\sum_{v\in  (W(T_w)  /W_{G_{g_s}}(T_w))^F }Q_{\tilde{v}^{-1}T_w\tilde{v}}^{\fT}(u_j)\theta(\dot{v} g_s\dot{v}^{-1})
\end{split}
\end{equation}
with the same notation in Lemma \ref{lem:decompositionofcen}. Since $\dot{v}\in N_G(T_w)$, we can consider $\dot{v}$ acts on $\theta$. So by defining $\dot{v}\cdot \theta(g_s):=\theta(\dot{v} g_s\dot{v}^{-1})$ (i.e., $\dot{v}\cdot \theta$ is a character of $T_w^F$),
we can write as follows:
\[
R_{T_w}^G(\theta)(g_su_j ) =\sum_{v\in  (W(T_w)  /W_{G_{g_s}}(T_w))^F }Q_{\tilde{v}^{-1}T_w\tilde{v}}^{\fT}(u_j)(\dot{v}\cdot \theta)(g_s).
\]

Then this gives the following relation:
\[\begin{split}
\sum_{\substack{
 g_s\in \tau_w^{-1}([\fT^F])  / W (T_w)}}  R_{T_w}^G(\theta)(g_su_j)&= 
\sum_{g_s\in \tau_w^{-1}([\fT^F])  / W(T_w) }  \sum_{v\in  (W(T_w)  /W_{G_{g_s}}(T_w))^F}Q_{\tilde{v}^{-1}T_w\tilde{v}}^{\fT}(u_j)(\dot{v}\cdot \theta)(g_s) \\
&=  \sum_{g_s\in \tau_w^{-1}([\fT^F]) }\frac{1}{|(W(T_w) /W_{G_{g_s}}(T_w))^F|}   \sum_{v\in  (W(T_w) /W_{G_{g_s}}(T_w))^F } Q_{\tilde{v}^{-1}T_w\tilde{v}}^{\fT}(u_j)(\dot{v}\cdot \theta)(g_s),
\end{split}
\]
where   the term $\frac{1}{|(W(T_w) /W_{G_{g_s}}(T_w))^F|}$ comes from the size of each orbit in $ \tau_w^{-1}([\fT^F])  / W(T_w) $ and Lemma \ref{lem:decompositionofcen}.

\subsubsection{}
Let us consider the M\"obius function $\mu_w $ on the poset $\mathcal{T}_w^{G^F}:=\{ G_{g_s}^F\,|\, g_s\in T_w^F \}$ partially ordered by inclusion.
Using this M\"obius function, we can compute the following:\[
\begin{split}
&\sum_{\substack{
g_s\in \tau_w^{-1}([\fT^F])  / W (T_w) }}  R_{T_w}^G(\theta)(g_su_j)\\
& = \sum_{\underline{\fT}^F\in [\fT^F]}  \frac{1}{|(W(T_w) /W_{\underline{\fT}}(T_w))^F|}  \sum_{v\in (W(T_w)  /W_{\underline{\fT}}(T_w))^F } Q_{\tilde{v}^{-1}T_w\tilde{v}}^{\underline{\fT}}(u_j) \sum_{\substack{ g_s\in  T_w^F \\ \text{s.t. }G_{g_s}^F=\underline{\fT}^F}} (\dot{v}\cdot \theta)(g_s)\\
& = \sum_{\underline{\fT}^F\in [\fT^F]}  \frac{1}{|(W(T_w) /W_{\underline{\fT}}(T_w))^F|}   \sum_{v\in (W(T_w) /W_{\underline{\fT}}(T_w))^F }Q_{\tilde{v}^{-1}T_w\tilde{v}}^{\underline{\fT}}(u_j) \left(\sum_{\substack{\fT'^F\in \mathcal{T}_w^{G^F}\\ \underline{\fT}^F\subset \fT'^F}} \mu_w(\underline{\fT}^F,\fT'^F)  \sum_{\substack{g_s\in T_w^F\text{ s.t.} \\ G_{g_s}^F\supset \fT'^F}} (\dot{v}\cdot \theta)(g_s)\right).
\end{split}
\]
\begin{rem}
Note that if $\fT'^F\in [\fT^F]$ such that $\fT'$ does not contain $T_w$, then the corresponding value vanishes. Therefore, we automatically choose elements in $[\fT^F]$ containing $T_w^F$ as noted in after Theorem \ref{thm:main1intro}.
\end{rem}
 
We can verify $\{g_s\in T_w^F\,|\, G_{g_s}^F\supset \fT'^F\}=Z(\fT'^F)$ for each  $\fT'^F\in \mathcal{T}_w^{G^F}$, 
and thus we obtain the following result for the last term:
 $$\sum_{\substack{g_s\in T_w^F \\  \fT'^F\subset G_{g_s}^F }} (\dot{v}\cdot \theta)(g_s)=\sum_{\substack{g_s\in T_w^F \cap  Z(\fT'^F)}} (\dot{v}\cdot \theta)(g_s) =\sum_{\substack{g_s\in  Z(\fT')^F }} (\dot{v}\cdot \theta)(g_s) 
 $$ since $Z(\fT')^F=Z(\fT'^F)$ when $\fT'$ is connected. Then we obtain the following:
\begin{equation*}\label{eq:delta_one}
\delta_{\dot{v}\cdot \theta,\fT'}:= \sum_{\substack{g_s\in  Z(\fT')^F }}  (\dot{v}\cdot \theta)(g_s) =\begin{cases}
 |  Z(\fT')^F|\quad  &\text{if }\dot{v}\cdot \theta|_{  Z(\fT')^F}=1\\
 0 &\text{otherwise}.
 \end{cases}
\end{equation*}
In summary, we have the following:
\begin{lem}
 For a given type $[\fT^F,u]$, we have 
 \[\begin{split}
&\sum_{\left\{ hgh^{-1} \,\middle\vert\, \substack{
h\in G^F,\ g\in \omega_{G^F}^{-1}([\fT^F,u])\\\text{s.t. } g_s\in \tau_w^{-1}([\fT^F])
}\right\}}   R_{T_w}^G(\theta)(hgh^{-1})\\
&=\frac{|G^F| }{|G_{{[\fT^F,u]}}^F|  }    \sum_{\underline{\fT}^F \in [\fT^F]} \frac{1}{|W_w(\underline{\fT})|}  \sum_{v\in W_w(\underline{\fT}) }  \sum_{\substack{u_j\in \underline{\fT}_u^F  }} Q_{\tilde{v}^{-1}T_w\tilde{v}}^{\underline{\fT}}(u_j) \sum_{\substack{\fT'^F\in \mathcal{T}_w^{G^F}\\ \underline{\fT}^F \subset \fT'^F}} \mu_w(\fT^F,\fT'^F)  \delta_{\dot{v}\cdot \theta,\fT'},
\end{split}
 \]
 where we sum over $u_j $ each representative, $G_{[\fT^F,u]}:=G_{g_{[\fT^F,u]}}$ for an element $g_{[\fT^F,u]}$ in $\omega_{G^F}^{-1}([\fT^F,u])$ and $W_w(\fT):=(W(T_w) /W_{\fT}(T_w))^F$ for $w\in W(T_1)$.
 \end{lem}
 Then we can conclude our first main result (for the $n=1$ case) by applying this result to Equation \eqref{eq:222}.
 \begin{thm}\label{thm:main-one}
 We have 
 \[\begin{split}
&\left\langle U_{\chi_1}\otimes \cdots \otimes U_{\chi_m}\otimes  R_{T_w}^G(\theta),1\right\rangle\\
= & \sum_{[\fT^F,u] \in\Xi_w^{G^F}}
\left(\prod_{i=1}^{m }  \frac{\displaystyle\sum_{{\bar{w}} \in W(T_1)} \chi_i({\bar{w}})  \displaystyle \sum_{\substack{ {v}\in W_{\bar{w}}(\fT) }}Q_{ \tilde{v}^{-1}T_{\bar{w}}\tilde{v} }^{\fT}( u) }{|W(T_1)|}   \right)  
\frac{ 1}{  |G_{{[\fT^F,u]}}^F|}  \sum_{\underline{\fT}^F\in [\fT^F]} \frac{ 1}{ |W_w(\fT)|  }    \sum_{\substack{v\in W_w(\underline{\fT}), \\ u_j\in \underline{\fT}_u^F }} Q_{\tilde{v}^{-1}T_w\tilde{v}}^{\underline{\fT}}(u_j)\Delta_{\underline{\fT}^F,\fT'^F}^{\dot{v}\cdot \theta}
\end{split}
\] 
where we sum over $u_j $ each representative in $\underline{\fT}_u^F$ and
\[
\Delta_{\fT^F,\fT'^F}^{\dot{v}\cdot \theta}:=\sum_{\substack{\fT'^F\in \mathcal{T}_w^{G^F}\\ \fT^F \subset \fT'^F}} \mu_w(\fT^F,\fT'^F)  \delta_{\dot{v}\cdot \theta,\fT'}.
\]
\end{thm}

\subsection{Multiple Deligne-Lusztig characters}
Now, let us finish the proof of Theorem \ref{thm:main1intro} by considering multiple Deligne-Lusztig characters. Then from Equation \eqref{eq:DLcharacter-with-semisimple}, we have  \[
R_{T_w}^G(\theta_1)(g)\cdots R_{T_w}^G(\theta_n)(g)= \sum_{\substack{v_i\in W_w(\fT) \\  i=1,\ldots, n}}\prod_{i=1}^n Q_{\tilde{v}_i^{-1}T_w\tilde{v}_i}^{\fT}(g_u)(\dot{v}_i\cdot \theta_i)(g_s).
\]
Then with the same computation, we can check that 
 \begin{equation*}\label{eq:multiple}
 \begin{split}
&\left\langle U_{\chi_1}\otimes \cdots \otimes U_{\chi_m}\otimes  R_{T_w}^{G}(\theta_1)\otimes \cdots \otimes R_{T_w}^G(\theta_n),1\right\rangle\\
= &  \sum_{[\fT^F,u] \in\Xi_w^{G^F}}
\left(\prod_{i=1}^{m }  \frac{\displaystyle\sum_{{\bar{w}} \in W(T_1)} \chi_i({\bar{w}})  \displaystyle \sum_{\substack{ {v}\in W_{\bar{w}}(\fT) }}Q_{ \tilde{v}^{-1}T_{\bar{w}}\tilde{v} }^{\fT}( u) }{|W(T_1)|}   \right) 
\frac{  1 }{   |G_{{[\fT^F,u]}}^F| } \sum_{\underline{\fT}^F \in [\fT^F]} \frac{ 1}{ |W_w(\underline{\fT})|  }   \sum_{\substack{ v_i\in W_w(\underline{\fT})\\ \text{for }i=1,\ldots , n,\\
u_j\in \underline{\fT}_u^F  }}  \prod_{\substack{i=1 }}^nQ_{\tilde{v}_i^{-1}T_w\tilde{v}_i}^{\underline{\fT}}(u_j)\Delta_{\underline{\fT}^F,\fT'^F}^{\prod_{i=1}^n \dot{v}_i\cdot \theta_i},
\end{split}
\end{equation*}
where we sum over $u_j $ each representative in $\underline{\fT}_u^F$ and 
\[
\Delta_{\fT^F,\fT'^F}^{\prod_{i=1}^n \dot{v}_i\cdot \theta_i }:=\sum_{\substack{\fT'^F\in \mathcal{T}_w^{G^F}\\ \fT^F \subset \fT'^F}}\mu_w(\fT^F,\fT'^F)\delta_{\prod_{i=1}^n \dot{v}_i\cdot \theta_i,\fT'}
\]
This result gives Theorem \ref{thm:main1intro}, since the terms corresponding to elements in $\Xi^{G^F} \setminus \Xi_w^{G^F}$ vanish by the discussion in \S\ref{ss:22}.

\subsubsection{} An important question is to determine conditions under which this multiplicity is zero or non-zero.
We provide a partial answer for the vanishing case.
\begin{cor}\label{coro:12} Let us assume that $Z(G)^F\neq \{1\}$.
If $\prod_{j=1}^n \dot{v}_i\cdot \theta_i$ is non-trivial on every non-trivial subgroup of $T_w^F$, then the multiplicity vanishes.
\end{cor}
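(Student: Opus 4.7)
The plan is to read the vanishing directly off the explicit formula \eqref{eq:mainstregu}. The entire dependence on the characters $\theta_1,\ldots,\theta_n$ is concentrated in the factor $\delta_{\prod_{j=1}^n \dot v_j\cdot \theta_j,\fT'}$, so the corollary will follow once I show that this $\delta$-factor vanishes for every tuple $(v_1,\ldots,v_n)$ and every $\fT'$ appearing in the inner sum.

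First I would unpack the definition of $\delta_{\eta,\fT'}$: it equals $|Z(\fT')^F|$ when $\eta|_{Z(\fT')^F}=1$ and it equals $0$ otherwise. Each $\fT'$ indexing the inner sum lies in $\mathcal{T}_w^G$ and contains some $\fT$, so $\fT'$ is a pseudo-Levi subgroup of $G$ having $T_w$ as a maximal torus; in particular $Z(\fT')\subseteq T_w$, and $Z(\fT')^F$ is a subgroup of $T_w^F$. Under our standing assumptions that $[G,G]$ is simply connected and $q$ is large enough, this centre $Z(\fT')^F$ is non-trivial: either $Z(\fT')^{0}$ is a positive-dimensional subtorus of $T_w$ (automatically contributing non-trivial $F$-fixed points for $q$ large), or $\fT'$ is semisimple and the structural assumptions on $G$ still force $Z(\fT')^F\neq\{1\}$.

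With this structural input, the hypothesis that $\prod_{j=1}^n \dot v_j\cdot \theta_j$ is non-trivial on every non-trivial subgroup of $T_w^F$ immediately implies that its restriction to $Z(\fT')^F$ is non-trivial, so $\delta_{\prod_{j=1}^n \dot v_j\cdot \theta_j,\fT'}=0$. As this conclusion is uniform over all indices $[\fT,u]\in \Xi_w^G$, all $\fT\in [\fT]$, all $(v_1,\ldots,v_n)\in W_w(\fT)^n$, and all $\fT'\in \mathcal{T}_w^G$ with $\fT\subset \fT'$, every term in \eqref{eq:mainstregu} is zero, and the multiplicity vanishes. The main (and essentially only) subtlety is the assertion that $Z(\fT')^F$ is non-trivial for every relevant pseudo-Levi $\fT'$, which is a structural fact about pseudo-Levi subgroups in the setting fixed at the start of the paper.
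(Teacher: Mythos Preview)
Your argument is exactly the one the paper intends: its entire proof is the single line ``From the definition of $\delta_{\theta,\fT}$, the result follows,'' and you have spelled out what that means, namely that each $Z(\fT')^F$ is a non-trivial subgroup of $T_w^F$ so the hypothesis kills every $\delta$-factor in \eqref{eq:mainstregu}. One small caveat: your justification that $Z(\fT')^F\neq\{1\}$ in the semisimple case is not airtight, since for $\fT'=G$ with $G$ of type $G_2$, $F_4$, or $E_8$ one has $Z(G)^F=\{1\}$; this boundary case is not addressed in the paper's one-line proof either.
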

  When $ {T}_w^F$ is cyclic, i.e., $ {T}_w^F\simeq \mathbb{F}_{q^m}^\times$, we can construct an easy example which is non-trivial on every non-trivial subgroup  by taking $\theta\in \widehat{T_w^F}$ such that $|\theta(\alpha)|=q^m-1$, where $\alpha$ is a generator of $T_w^F$.
\begin{proof}
From the definition of $\delta_{\theta,\fT}$, the result follows.
\end{proof}

\subsection{The multiplicity involving the Steinberg character}\label{ss:st-including} Let us assume that $\chi_m$ is the sign character of $W(T_1)$. Then $U_{\chi_m}$ is the Steinberg character of $G^F$, whose character value is zero on every non-semisimple element, cf. \cite[\S7]{DL76}. Hence we only need to consider types $[\fT^F,1]$ in $\Xi_w^{G^F}$.
Under this situation, we provide a partial answer for the non-vanishing case. 

\begin{cor} \label{coro:degree-non-zero-condition} Let us assume that $q$ is sufficiently large and $\chi_m=sgn$.
Let $\Phi(\fT)^+$ be the set of positive roots of $\fT$. If  $$(n-1)(|\Phi^+ | - |\Phi(\fT)^+ |)-(m-1)|\Phi(\fT)^+|+\mathrm{dim} (Z(G))-\mathrm{dim} (Z(\fT)) >0$$ for every $\fT $ containing $T_w$, then $\langle U_{\chi_1}\otimes \cdots \otimes U_{\chi_m}\otimes  R_{T_w}^G(\theta_1)\otimes \cdots \otimes R_{T_w}^G(\theta_n),1 \rangle\neq 0$ unless every $\prod_{j=1}^n \dot{v}_j \cdot \theta_j$ is non-trivial on $Z(G)^F$.
 \end{cor}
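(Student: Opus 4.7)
The plan is to specialize Theorem~\ref{thm:main1intro} to $\chi_m = \mathrm{sgn}$: since $U_{\chi_m} = \mathrm{St}$ vanishes outside the semisimple set, only types $[\fT, 1] \in \Xi_w^G$ survive, and I will analyze each surviving summand to leading order in $q$, showing that under the corollary's hypothesis the $\fT = G$ summand strictly dominates in $q$-degree.

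First I would isolate the $\fT = G$ contribution. For $g_s \in Z(G)^F$ one has $W_w(G) = \{1\}$, the only $\fT' \supset G$ in $\mathcal{T}_w^G$ is $G$ itself, and $U_{\chi_i}(g_s) = U_{\chi_i}(1)$, $\mathrm{St}(g_s) = q^{|\Phi^+|}$, $R_{T_w}^G(\theta_j)(g_s) = Q_{T_w}^G(1)\,\theta_j(g_s)$, so this piece collapses to
\[
\frac{|Z(G)^F|}{|G^F|}\,\mathrm{St}(1)\,Q_{T_w}^G(1)^n \prod_{i=1}^{m-1} U_{\chi_i}(1) \cdot \delta_{\prod_j \theta_j,\,G}.
\]
Because $Z(G)$ is centralized by every $\dot v_j$, the corollary's ``unless'' clause is exactly the condition $\delta_{\prod_j \theta_j,\,G} = 0$; in the complementary case $\delta_{\prod_j \theta_j,\,G} = |Z(G)^F|$, and the displayed expression is non-zero (each fake-degree $U_{\chi_i}(1)$ is a non-zero polynomial in $q$), of leading $q$-order
\[
D_G \;:=\; (n-1)|\Phi^+| + \dim Z(G) - \mathrm{rk}(G) + \sum_{i=1}^{m-1} \deg_q U_{\chi_i}(1) \;\ge\; (n-1)|\Phi^+| + \dim Z(G) - \mathrm{rk}(G).
\]

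Next, for each $\fT \subsetneq G$ with $[\fT, 1] \in \Xi_w^G$, I would upper-bound the $q$-order of its summand. Using $Q_{T_v}^\fT(1), Q_{T_w}^\fT(1) \sim q^{|\Phi(\fT)^+|}$, $|\fT^F| \sim q^{\mathrm{rk}(G)+2|\Phi(\fT)^+|}$, $\delta_{\cdot,\fT'} \le |Z(\fT')^F| \le q^{\dim Z(\fT)}$, and crucially that the indexing cardinalities $|\{\fT \in [\fT] : T_w \subset \fT\}|$ and $|W_w(\fT)|$ are $q$-independent (they are finite Weyl-group indices), I arrive at
\[
D_\fT \;:=\; (m+n-2)|\Phi(\fT)^+| + \dim Z(\fT) - \mathrm{rk}(G).
\]
A direct subtraction gives $D_G - D_\fT \ge (n-1)(|\Phi^+|-|\Phi(\fT)^+|) - (m-1)|\Phi(\fT)^+| + \dim Z(G) - \dim Z(\fT)$, which is positive by hypothesis.

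Consequently the full multiplicity, to leading order in $q$, agrees with the non-zero $\fT = G$ contribution, so for $q$ sufficiently large (as assumed at the outset of the paper) this multiplicity is a non-zero integer. The main obstacle I anticipate is justifying the $q$-independence of the indexing cardinalities $|\{\fT \in [\fT] : T_w \subset \fT\}|$ and $|W_w(\fT)|$: only once that is pinned down does the $\delta$-factor alone dictate the leading $q$-scaling of the non-dominant summands, after which the degree comparison $D_G > D_\fT$ is a straightforward algebraic rearrangement of the corollary's inequality.
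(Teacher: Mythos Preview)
Your approach is essentially the paper's: restrict to types $[\fT,1]$ via $U_{\chi_m}=\St$, bound the $q$-degree of the $\fT=G$ summand from below and of each $\fT\neq G$ summand from above, and observe that the corollary's inequality is exactly the condition $D_G>D_\fT$. One harmless slip: your displayed $\fT=G$ contribution carries a spurious extra factor $|Z(G)^F|$ (it is already contained in $\delta_{\prod_j\theta_j,G}$), though your computed $D_G$ is correct regardless; the $q$-independence of the combinatorial indexing sets that you flag as the main obstacle is a point the paper itself simply takes for granted.
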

 \begin{proof}To find a condition which implies $\langle U_{\chi_1}\otimes \cdots \otimes U_{\chi_m}\otimes  R_{T_w}^G(\theta_1)\otimes \cdots \otimes R_{T_w}^G(\theta_n),1 \rangle\neq 0$, let us consider $q$ as a variable, not a power of prime. Note that we can consider the degree $\langle U_{\chi_1}\otimes \cdots \otimes U_{\chi_m}\otimes  R_{T_w}^G(\theta_1)\otimes \cdots \otimes R_{T_w}^G(\theta_n),1 \rangle$ in $q$.
Then for any pseudo-Levi subgroup $\fT$ of $G$, the degree of $   \frac{ \sum_{{\bar{w}} \in W(T_1)} \chi({\bar{w}})    \sum_{\substack{ {v}\in W_{\bar{w}}(\fT) }}Q_{ \tilde{v}^{-1}T_{\bar{w}}\tilde{v} }^{\fT}( 1) }{|W(T_1)|}  $ in $q$ can be at most $|\Phi(\fT)^+|$ (since $Q_{ T_v }^{\fT}(1)=\epsilon_\fT \epsilon_{T_v} |\fT^F:T_v^F|_{p'}$), and at least $0$.
 
 Let us consider the degree of terms over $[\fT,1]
$ in $ \Xi_w^{G^F}$ of $\left\langle U_{\chi_1}\otimes \cdots \otimes U_{\chi_m}\otimes  R_{T_w}^G(\theta_1)\otimes \cdots \otimes R_{T_w}^G(\theta_n),1\right\rangle$ from the result in \S\ref{eq:multiple}. (Recall that we do not need to consider non-trivial unipotent element from the existence of the Steinberg character.)
When $\fT=G$, the possible smallest degree is $$(n-1)|\Phi^+|-\mathrm{rk}(G)+\mathrm{dim}(Z(G))$$ by considering the degree of $\prod_{i=1}^{m-1}  \frac{ \sum_{{\bar{w}} \in W(T_1)} \chi_i({\bar{w}})    \sum_{\substack{ {v}\in W_{\bar{w}}(G) }}Q_{ \tilde{v}^{-1}T_{\bar{w}}\tilde{v} }^{G}( 1) }{|W(T_1)|}  $ as $0$. When $\fT\neq G$,   the possible largest degree is $$(n-1)|\Phi(\fT)^+|+(m-1)|\Phi(\fT)^+|-\mathrm{rk}(\fT)+\mathrm{dim}(Z(\fT))$$ because the maximal degree of $\prod_{i=1}^{m-1} \frac{ \sum_{{\bar{w}} \in W(T_1)} \chi_i({\bar{w}})    \sum_{\substack{ {v}\in W_{\bar{w}}(\fT) }}Q_{ \tilde{v}^{-1}T_{\bar{w}}\tilde{v} }^{\fT}( 1) }{|W(T_1)|}  $ can be $(m-1)|\Phi(\fT)^+|$. Note that $\mathrm{rk}(G)=\mathrm{rk}(\fT)$. Our assumption then implies that, among the terms of $\Xi_w^{G^F}$ indexed by
pairs $[\mathfrak{T},1]$, the term of highest degree is the one corresponding
to $[\mathfrak{T},1] = [G,1]$; every other term, corresponding to
$[\mathfrak{T},1]$ with $\mathfrak{T}$ a proper subgroup of $G$, has strictly
smaller degree. Consequently, the term corresponding to $[G,1]$ cannot be
cancelled, which shows that the multiplicity is nonzero.\end{proof}
 
 \subsubsection{Deligne-Lusztig characters over a split maximal torus}\label{sss:split-case-simple-result} Let us now consider the case of a split maximal torus $T:=T_1$ and $\chi_m=sgn$.
In this situation, the multiplicity
$\left\langle
U_{\chi_1}\otimes \cdots \otimes U_{\chi_m}
\otimes
R_{T}^G(\theta_1)\otimes \cdots \otimes R_{T}^G(\theta_n),
\, 1
\right\rangle$
admits a simpler expression, given in terms of a linear combination of the dimensions of the unipotent almost   characters of $\mathfrak{T}^F$. Let us assume that $s\in T^F$ and $G_s^F=\fT^F$. Then  from \cite[Example 3.4]{geck2025}, we have $$U_\chi(s)=   \sum_{\psi \in \widehat{W_\fT(T )}} m(\psi,\chi ) \dim\left(U_\psi^{\fT} \right)
 ,$$
 where $U_\psi^{\fT}$ is the unipotent almost   character corresponding to $\psi \in \widehat{W_\fT(T )}$ and $m(\psi,\chi)$ is the multiplicity of $\psi \in \widehat{W_{\fT}(T)}$ in the restriction of $\chi $ to ${W_{\fT}(T)}$.
Then we have the following result with Remark \ref{rem:split-}:
\[
 \begin{split}
&  \left\langle U_{\chi_1}\otimes \cdots \otimes U_{\chi_m}  \otimes R_{T }^G(\theta_1)\otimes \cdots \otimes R_{T }^G(\theta_n),1\right\rangle\\
= &  \sum_{\substack{[\fT^F,u] \in \Xi^{G^F}\\\text{s.t. }u=1}}
\left(\prod_{i=1}^{m }  \sum_{\psi \in \widehat{W_\fT(T )}} m(\psi,\chi_i) \dim \left(U_\psi^{\fT} \right)
\right)
\frac{|W_\fT(T ) |  Q_{T_1}^{\fT}(1)^n}{ |W (T ) ||G_{{[\fT^F,1]}}^F| }  \sum_{\underline{\fT}^F \in [\fT^F]} \sum_{\substack{ v_i\in W(T)/W_{\underline{\fT}}(T)\\ i=1,\ldots , n}} \Delta_{\underline{\fT}^F,\fT'^F}^{\prod_{i=1}^n \dot{v}_i\cdot \theta_i }.
\end{split}
\]
Furthermore, in this case, the M\"obius function $\mu_1$ is studied in \cite{DH, FJ} over simple groups.

\bigskip 
 \subsection{Tensor square of the Steinberg character}
There is an interesting property of $St \otimes St$, the tensor square of the Steinberg character. 

 \begin{thm}\cite[Theorem 1.2]{heide2013conjugacy}
 Let $G$ be a finite simple group of Lie type, other than $PSU_n(q)$ with $n \geq 3$ coprime to $2(q+1)$. Then every irreducible character of $G$ is a constituent of the tensor square $St \otimes St$. 
 \end{thm}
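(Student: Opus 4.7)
The plan is to establish $\langle St\otimes St,\chi\rangle > 0$ for every irreducible $\chi \in \widehat{G^F}$ through a direct character-theoretic computation built on the framework of this paper. First, I would exploit the vanishing of the Steinberg character on non-semisimple elements, together with the classical formula $St(s) = \epsilon_G \epsilon_{G_s}|G_s^F|_p$ for semisimple $s$, to reduce the inner product to a sum over semisimple conjugacy classes:
\[
\langle St\otimes St,\chi\rangle = \frac{1}{|G^F|}\sum_{s \in G^F\cap G^{ss}} |G_s^F|_p^2\,\overline{\chi(s)}.
\]

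Next, I would reorganise this sum by grouping semisimple elements according to the $G$-conjugacy class of their centraliser $G_s$ (a pseudo-Levi subgroup) and apply M\"obius inversion on the poset $\mathcal{T}_w^G$ exactly as in Section \ref{s:2}. Combining this with Lusztig's Jordan decomposition of characters, which writes $\chi$ in terms of Deligne--Lusztig characters and unipotent characters of pseudo-Levi subgroups of the dual group, the inner product is converted into an explicit closed-form expression involving Green functions, the M\"obius coefficients $\mu_w(\fT,\fT')$, and the $\delta_{\theta,\fT'}$ function of Theorem \ref{thm:main1intro}. This expression is essentially the $m=2$, $\chi_1=\chi_2=\sgn$, $n=0$ incarnation of the machinery developed in Section \ref{s:2}, with $\chi$ expressed through its Jordan decomposition constituents playing the role of the $R_{T_w}^G(\theta_i)$ factors.

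The most delicate and decisive step is proving positivity of this expression. The alternating signs coming from $\epsilon_G \epsilon_\fT$ and from $\mu_w$ mean that one must show that the principal contribution (associated to the generic stratum $\fT = G$) survives all cancellations. I would isolate this leading term and bound the remaining pseudo-Levi contributions using Curtis--Alvis duality and the Deligne--Lusztig sign rules, with the expectation that the exceptional characters in $\mathrm{PSU}_n(q)$ (for $n\geq 3$ coprime to $2(q+1)$) are precisely those for which the central action of $Z(G)^F$ or its dual obstructs the Jordan decomposition in such a way that forces additional cancellation, pushing the sum down to $0$. The main obstacle is this positivity argument itself, where the sign pattern is intricate and a case analysis by Lie type, paralleling the approach of Heide--Saxl--Tiep--Zalesski in the cited work, is likely unavoidable.
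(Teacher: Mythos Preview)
The paper does not prove this statement: it is quoted verbatim from \cite[Theorem 1.2]{heide2013conjugacy} and merely serves as context for the corollaries that follow in \S2.5. There is therefore no ``paper's own proof'' to compare your proposal against; the paper's only contribution in that subsection is the observation that the cited theorem fails in the setting of connected reductive $G$ with simply connected derived subgroup (Corollary~\ref{coro:12} and the non-vanishing corollary below it), precisely because certain $\theta$ make $\delta_{\dot v\cdot\theta,\fT'}$ vanish identically.

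As for the proposal itself: the reduction to semisimple classes and the reorganisation by pseudo-Levi type are sound, and this is indeed the skeleton of \S\ref{s:2}. But two genuine obstacles separate your sketch from an actual proof. First, the paper's framework assumes $[G,G]$ is simply connected and $q$ large, whereas the cited theorem is about finite \emph{simple} groups of Lie type; passing between the two requires controlling characters under the isogeny, which is exactly where the $\mathrm{PSU}_n(q)$ exception lives, and your proposal does not address this passage. Second, the machinery of Theorem~\ref{thm:main1intro} handles only tensor products of almost-unipotent characters with Deligne--Lusztig characters $R_{T_w}^G(\theta)$, not with an arbitrary irreducible $\chi$; expressing $\chi$ via Jordan decomposition introduces unipotent characters of pseudo-Levi subgroups whose values on arbitrary unipotent elements are not controlled by the paper's tools. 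The positivity step you flag as ``the main obstacle'' is therefore not merely delicate but lies outside the scope of what \S\ref{s:2} can deliver, and Heide--Saxl--Tiep--Zalesski in fact proceed by a completely different argument (bounding character ratios and exploiting the real-valuedness of $\mathrm{St}$), not by the M\"obius-inversion route you describe.
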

 
 \subsubsection{Vanishing case}
 Let us consider this theorem in our case, i.e., \[
 \left\langle St \otimes St \otimes R_{T_w}^G(\theta),1 \right\rangle\]
 for $\theta \in \widehat{T_w^F}$. In this case, we have the following vanishing result. 
 
 \begin{cor}
 Let us assume that $Z(G)^F\neq \{1\}$. If $\theta$ is non-trivial on every non-trivial subgroup of $T_w^F$, then the multiplicity vanishes, i.e., $ \left\langle St \otimes St \otimes R_{T_w}^G(\theta),1 \right\rangle=0$.
 \end{cor}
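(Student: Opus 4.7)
The plan is to recognise the statement as a direct specialisation of Corollary~\ref{coro:12}. Since $St = U_{\sgn}$ for any connected reductive group, the inner product $\langle St \otimes St \otimes R_{T_w}^G(\theta),\,1\rangle$ is the instance of the formula in Theorem~\ref{thm:main1intro} obtained by taking $m=2$, $\chi_1=\chi_2=\sgn$, $n=1$, and $\theta_1=\theta$. In that formula, every summand over $\Xi_w^G$ carries a factor of the form $\delta_{\dot{v}\cdot\theta,\fT'}$ indexed by some $v\in W_w(\fT)$ and some $\fT'\in\mathcal{T}_w^G$ containing $\fT$, so the whole quantity vanishes as soon as all such $\delta$-factors vanish.

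To invoke Corollary~\ref{coro:12} I would first verify its hypothesis for $\theta_1=\theta$, namely that $\dot{v}\cdot\theta$ is non-trivial on every non-trivial subgroup of $T_w^F$ for each $v\in W(T_w)^F$. The key point is the Weyl-invariance of the faithfulness condition: for any representative $\dot{v}\in N_G(T_w)^F$ and any non-trivial subgroup $H\le T_w^F$,
\[
(\dot{v}\cdot\theta)\big|_{H} = \theta\big|_{\dot{v}^{-1}H\dot{v}},
\]
and $\dot{v}^{-1}H\dot{v}$ is again a non-trivial subgroup of $T_w^F$ because conjugation by $\dot{v}$ is a group automorphism of $T_w^F$. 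Hence the faithfulness hypothesis on $\theta$ propagates to each $\dot{v}\cdot\theta$.

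With this in hand, the conclusion is immediate from Corollary~\ref{coro:12}, since $\delta_{\dot{v}\cdot\theta,\fT'}=0$ whenever the subgroup $Z(\fT')^F\le T_w^F$ is non-trivial, which is the case for every $\fT'=G_s$ with $s\ne 1$; the contribution of types $[\fT,u]$ with $u\ne 1$ is already killed by the vanishing of $St$ on non-semisimple elements, so only $u=1$ matters. I do not foresee any real obstacle: the argument is a direct bookkeeping application of the earlier corollary, with the Weyl-equivariance display above being the only observation needed.
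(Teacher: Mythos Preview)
Your approach is correct and is exactly the paper's: the paper's entire proof reads ``The proof is identical to that of Corollary~\ref{coro:12},'' and you carry this out by specialising Theorem~\ref{thm:main1intro} to $m=2$, $\chi_1=\chi_2=\sgn$, $n=1$ and invoking Corollary~\ref{coro:12}. Your Weyl-invariance observation $(\dot{v}\cdot\theta)|_H=\theta|_{\dot v^{-1}H\dot v}$ makes explicit the one point the paper leaves implicit, namely that the hypothesis of Corollary~\ref{coro:12} (stated for $\dot v\cdot\theta$) follows from the present hypothesis on $\theta$ alone; the remarks in your last paragraph about $St$ vanishing on non-semisimple elements and about $s\neq 1$ are tangential to the argument but harmless.
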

 
 \begin{proof}
 The proof is identical to that of Corollary \ref{coro:12}.
 \end{proof}
This observation implies that  \cite[Theorem 1.2]{heide2013conjugacy}
does not generalise to arbitrary reductive groups. 
 Another reason is that the dimension of the Steinberg character of $\mathrm{GL}_2(\mathbb{F}_q)$ is $q$, so the dimension of its square tensor product is $q^2$. However, the sum of the dimension of every irreducible character is $q^3-q^2$, so the  tensor square of the Steinberg character cannot contain every irreducible character of $\mathrm{GL}_2(\mathbb{F}_q)$ when $q>2$.
 
\subsubsection{Non-vanishing case}
Now, let us consider $\theta \in \widehat{T_w^F}$, which is trivial at least on the centre of $G$. Then we have the following result on the multiplicity $\left\langle St \otimes St \otimes R_{T_w}^G(\theta), 1 \right\rangle $.

Let $(X,\Phi,X^\vee,\Phi^\vee)$ be the root datum of $G$, and the modulus of $G$, denoted by $d(G)$, is defined by the least common multiple of $|\mathrm{Tor}(X/\langle \Psi\rangle)|$, where $\Psi$ ranges over all closed subsystems of $\Phi$. For more details, please see \cite[\S4.3]{KNP}.
 
 \begin{cor}Let us assume that $q$ is sufficiently large, and $\theta \in \widehat{T_w^F}$  is in general position and trivial at least on the centre of $G^F$.
 When $G$ is not a product of groups of type $A_1$, the multiplicity $\left\langle St \otimes St \otimes R_{T_1}^G(\theta), 1 \right\rangle $ is non-zero. Furthermore, its degree is $ |\Phi^+|+\mathrm{dim}(Z(G))-\mathrm{rk}(G)$. In addition, when $w=1$ and $q\equiv1 \pmod{d(G)}$, the leading coefficient is the number of   connected components of $Z(G)$ (up to sign).
  \end{cor}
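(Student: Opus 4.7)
My plan is to apply Theorem~\ref{thm:main-one} with $m=2$, $\chi_1=\chi_2=\mathrm{sgn}$, and $n=1$. Since $U_{\mathrm{sgn}}=St$ vanishes off the semisimple locus, only types $[\fT,1]$ contribute to the sum, and by Proposition~\ref{lem:almostunipotentvalue} the two Steinberg factors produce $St(s)^2=q^{2|\Phi(\fT)^+|}$ (the signs cancel). Using $Q_{T_w}^{\fT}(1)=\epsilon_{\fT}\epsilon_{T_w}|\fT^F{:}T_w^F|_{p'}$ of $q$-degree $|\Phi(\fT)^+|$, $|\fT^F|$ of $q$-degree $\mathrm{rk}(G)+2|\Phi(\fT)^+|$, and $\delta_{\dot v\cdot\theta,\fT'}=|Z(\fT')^F|$ of $q$-degree at most $\dim Z(\fT')\le\dim Z(\fT)$, a direct count bounds the $q$-degree of the $[\fT,1]$-summand by $|\Phi(\fT)^+|-\mathrm{rk}([\fT,\fT])$. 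For $\fT=G$ this equals the claimed $|\Phi^+|+\dim Z(G)-\dim T_1$.

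The heart of the proof is to show that for every proper pseudo-Levi $\fT\subsetneq G$,
\[
|\Phi^+|-|\Phi(\fT)^+|\;>\;\mathrm{rk}([G,G])-\mathrm{rk}([\fT,\fT]).
\]
Setting $\ell$ equal to the right-hand side and using that $\Phi(\fT)=\Phi\cap V_{\Phi(\fT)}$ for a pseudo-Levi subsystem, the roots in $\Phi\setminus\Phi(\fT)$ project to a symmetric spanning set in the $\ell$-dimensional quotient $V_\Phi/V_{\Phi(\fT)}$, giving $|\Phi\setminus\Phi(\fT)|\ge 2\ell$ and hence $|\Phi^+|-|\Phi(\fT)^+|\ge\ell$. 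Equality forces $\Phi\setminus\Phi(\fT)$ to be an orthogonal $A_1^\ell$-subsystem splitting off $\Phi$, which contradicts the hypothesis that $G$ has no simple factor of type $A_1$ (my reading of ``$G$ is not a product of groups of type $A_1$''). This is the step I expect to be the main obstacle, since the equality case has to be excluded cleanly under the stated hypothesis.

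For the leading coefficient I would evaluate the $\fT=G$ term directly. Here $|G_{[G,1]}^F|=|G^F|$, $|W_G(T_w)^F|=|W(T_w)^F|$ so that $W_w(G)$ is trivial, the M\"obius sum collapses to the single term $\fT'=G$ with $\mu_w(G,G)=1$, and $\delta_{\dot v\cdot\theta,G}=|Z(G)^F|$ because $Z(G)\subset T_w$ is pointwise $W$-fixed and $\theta|_{Z(G)^F}=1$ by hypothesis. After simplification
\[
\mathrm{Term}[G,1]\;=\;\frac{\epsilon_G\epsilon_{T_w}\,q^{|\Phi^+|}\,|Z(G)^F|}{|T_w^F|},
\]
whose $q$-degree is $|\Phi^+|+\dim Z(G)-\dim T_1$ and whose leading coefficient is $\epsilon_G\epsilon_{T_w}\cdot|\pi_0(Z(G))|$ (since $|T_w^F|$ is $q$-monic and the leading coefficient of $|Z(G)^F|$ counts the $F$-stable connected components of $Z(G)$). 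Combined with the strict degree inequality above, this identifies the leading term of the whole multiplicity and shows it is a non-zero polynomial of the asserted degree and leading coefficient.
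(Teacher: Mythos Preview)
Your approach coincides with the paper's: restrict to semisimple types $[\fT,1]$, bound the $q$-degree of each summand by $|\Phi(\fT)^+|+\dim Z(\fT)-\dim T_w$, show that the $\fT=G$ term strictly dominates, and read off its leading coefficient as $\epsilon_G\epsilon_{T_w}\,|\pi_0(Z(G))|$. The only difference is that the paper simply asserts the key inequality $|\Phi^+|-|\Phi(\fT)^+|+\dim Z(G)-\dim Z(\fT)>0$ ``holds for all types except $A_1$'' without proof, whereas you supply a root-theoretic argument for it.

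One inaccuracy in that argument: the identity $\Phi(\fT)=\Phi\cap V_{\Phi(\fT)}$ is \emph{false} for pseudo-Levi subsystems in general (take the long-root $A_2$ inside $G_2$, or the short-root $A_1\times A_1$ inside $C_2$); this property characterises Levi, not pseudo-Levi, subsystems. Fortunately you do not actually need it. The images of $\Phi\setminus\Phi(\fT)$ in $V_\Phi/V_{\Phi(\fT)}$ span simply because $\Phi$ spans $V_\Phi$ while $\Phi(\fT)$ maps to zero; choosing $\ell$ roots with linearly independent images and pairing them with their negatives already gives $|\Phi\setminus\Phi(\fT)|\ge 2\ell$, and your equality-case analysis then goes through unchanged. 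Your reading of the hypothesis as ``no simple factor of type $A_1$'' is precisely what that equality case demands. One small omission: you do not justify that the multiplicity is a polynomial in $q$; the paper handles this separately by citing \cite[Remark 2.7]{letellier2023series}.
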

  The logic of the proof is the same as the proof of Corollary \ref{coro:degree-non-zero-condition}.
 \begin{proof} Note that $\left\langle St \otimes St \otimes \epsilon_G \epsilon_{T_w} R_{T_w}^G(\theta), 1 \right\rangle=\epsilon_G \epsilon_{T_w}\left\langle St \otimes St \otimes  R_{T_w}^G(\theta), 1 \right\rangle  $ is an integer since $St$ and $\epsilon_G \epsilon_{T_w}R_{T_w}^G(\theta)$ are actual characters.

Let us consider the degree of each term indexed by $[\fT,1]\in \Xi_w^{G^F}$.
For each $\fT$ and using Remark \ref{rem:split-}, the degree in $q$ of 
\begin{equation}\label{eq:coro14}\frac{|\fT^F|_pQ_{ T_w }^{\fT}(1)   }{|\fT^F|_{p'}|W_w(\fT)|}  \sum_{\underline{\fT}^F\in [\fT^F]} \sum_{v\in W_w(\underline{\fT}) }   \sum_{\substack{\fT'^F\in \mathcal{T}_w^{G^F}\\ \underline{\fT}^F \subset \fT'^F}} \mu_w(\underline{\fT}^F,\fT'^F)  \delta_{\dot{v}\cdot \theta,\fT'^F}
\end{equation} is at most  \[
 |\Phi(\fT)^+|+\mathrm{dim}(Z(\fT))-\mathrm{rk}(G).\]
 When $\fT=G$, then the degree of the corresponding term is exactly
 \[
 |\Phi^+|+\dim(Z(G))-\mathrm{rk}(G)
 \]
 since the corresponding M\"obius function has value $1$.
Then if we can show that the degree $ |\Phi^+|+\dim(Z(G))-\mathrm{rk}(G)$ is strictly larger than the other terms' upper bound ($ |\Phi(\fT)^+|+\mathrm{dim}(Z(\fT))-\mathrm{rk}(G)$), we can check that the degree of the multiplicity is $ |\Phi^+|+\dim(Z(G))-\mathrm{rk}(G)$. In other words, we need to show the following
\begin{equation}\label{eq:degree-computation}
 |\Phi^+|+\mathrm{dim}(Z(G))-( |\Phi(\fT)^+|+\mathrm{dim}(Z(\fT))) >0
\end{equation} for any proper pseudo-Levi subgroup $\fT$ of $G$.
Note that the inequality in \eqref{eq:degree-computation} holds for all types except $A_1$ with the following reasons.
If $\dim(Z(\fT))< \mathrm{rk}(G)$, then this is true from \cite[Lemma 6.1.3]{KNP}. 
So, let us assume that $\dim(Z(\fT))=\mathrm{rk}(G)$, which means that $\fT=T_1$, i.e., $|\Phi(\fT)^+|=0$. Then since $|\Phi^+|>\mathrm{rk}(G)-\dim(Z(G))=\dim(T_1/Z(G))$ if $G$ is not the product of $A_1$ types, so we can check that Equation \eqref{eq:degree-computation} holds.
 So the degree of $\left\langle St \otimes St \otimes R_{T_1}^G(\theta), 1 \right\rangle $ is $ |\Phi^+|+\mathrm{dim}(Z(G))-\mathrm{rk}(G).$
 
Now, let us consider $q$ as a power of prime, not a variable.
Under the condition $q\equiv1 \pmod{d(G)}$ and $w=1$, we can determine the leading coefficient uniformly. Note that the leading coefficient comes from the expression in Equation \eqref{eq:coro14} when $\fT=G$, i.e., $\frac{|G^F|_p  }{|G^F|_{p'} }  Q_{T_1}^{G}(1) |Z(G)^F|$. Then  the leading coefficient is $\epsilon_G\epsilon_{T_1}|\pi_0(Z(G))|$  from \cite[\S5.2]{KNP} (since we assume that our $G$ has simply connected derived subgroup) with the fact that $|G^F|_p$, $|G^F|_{p'}$ and $Q_{T_w}^{G}(1)$ are monic (up to sign).
 \end{proof}

 \section{Frobenius-Schur indicators of the modules over the Drinfeld	double }\label{s:3}

 For Hopf algebras, Frobenius-Schur indicators were studied from \cite{LM,FGSV}.  
The higher Frobenius–Schur indicators for semisimple Hopf algebras are related to some invariants of irreducible characters, for example,  the order, the multiplicity, the exponent, and the index. 
In particular, when we consider the group algebra $\mathbb{C}[H]$ for a finite group, we can get the corresponding higher Frobenius-Schur indicator is the same as the higher Frobenius-Schur indicator over $\widehat{H}$, cf. \cite[\S2.3]{KSZ}.
The Drinfeld double of a finite dimensional Hopf algebra $V$ is a Hopf algebra with underlying vector space $V^*\otimes V$. Kashina-Sommerh\"auser-Zhu considered   higher Frobenius-Schur indicators of a Drinfeld double of a Hopf algebra in \cite[\S6]{KSZ}, and Schauenburg considers   higher Frobenius-Schur indicators of a Drinfeld double of a finite group in \cite{sch} using conjugacy classes and character table of the finite group.

 The goal of this section is to consider an application of the previous computation to   higher Frobenius–Schur indicators of modules over a Drinfeld double of a finite group using results in \cite{sch}. 
Recall that a key ingredient of this indicator is the cardinality of the set
$G_m(g,z):= \{x\in G^F\,|\, x^m=(gx)^m=z\}$, cf. \cite[\S3]{sch}.
Let us define a map $$\gamma_m^z\,:\, G_z\rightarrow \mathbb{C}\quad \text{given by} \quad \gamma_m^z(g)=|G_m(g,z)|.$$ Then this is a class function on $G^F$ from \cite[Lemma and Definition 3.1]{sch}, and we have the following decomposition from \cite[Equation (4.5)]{sch}:
\[
\gamma_m^z=|G_z|\sum_{\chi \in \widehat{G_z^F}} \frac{|\left\langle \chi, w_m^z\right\rangle|^2}{\chi(1)}\chi,\quad \text{where }w_m^z(x)=\delta_{x^m,z}. 
\] So it is an interesting topic to  compute each coefficient $|\left\langle \chi , w_m^z\right\rangle |^2$ for each $\chi \in \widehat{G^F}$. 
	
\subsection{Semisimple elements}	As noted in \S\ref{ss:12}, let us consider the inner product $$\left\langle \chi, w_{\mathfrak{q}}^1\right\rangle$$
for $\mathfrak{q}:=|G^F|_{p'}$.  From the definition, we have $w_{\mathfrak{q}}^1(x)=\begin{cases}
		1 \quad \text{if } |x| \text{ divides } \mathfrak{q} \\
		0 \quad \text{otherwise},
		\end{cases}$
and so we need to compute
 \begin{equation}\label{eq:inner}
 \left\langle \chi, w_{\mathfrak{q}}^1\right\rangle=\frac{1}{|G^F|}\sum_{g\in G^F} \chi(g)w_{\mathfrak{q}}^1(g)= \frac{1}{|G^F|}\sum_{\substack{g\in G^F \\ g^{\mathfrak{q}}=1}} \chi(g) .
 \end{equation}
 
Recall that under the Jordan decomposition, $g=g_sg_u=g_ug_s$, the order of $g_u$ is   a power of $p$ and the order of $g_s$ is prime to $p$. This implies that if $g_u$ is non-identity, then $g^{\mathfrak{q}} \neq 1$. Therefore, we see that in Equation \eqref{eq:inner} it suffices to consider only semisimple elements of $G^F$. The computation uses an idea similar to the one employed in evaluating Equation \eqref{eq:mainstregu} in Theorem \ref{thm:main1intro}.
Let us recall the work of Deligne and Lusztig  for a semisimple element $s$ and $\chi \in \widehat{G^F}$ (\cite[Corollary 7.6]{DL76}): \[
 \chi(s)=\frac{1}{|G_s^F|_p}\sum_{(T,\theta)}\epsilon_{G_s}\epsilon_T\left\langle R_T^G(\theta),\chi\right\rangle \theta(s),
 \]
 where the sum runs over every $F$-stable maximal torus $T$ containing $s$ and for every $\theta\in \widehat{T^F}$.
 To compute $ \left\langle \chi, w_{\mathfrak{q}}^1\right\rangle$, let us consider the set $\mathbb{T}^{G^F}:= \{ [G_s^F]\,|\, s\in G^{ss,F}\}$ (recall that $G^{ss,F}$ is the set of semisimple elements in $G^F$), and the following map $\tau_{G^F}\,:\, G^{ss,F}\rightarrow \mathbb{T}^{G^F}$ given by $\tau_{G^F}(s)=[G_s^F]$.

Then our problem becomes
   \[\begin{split}
 \left\langle \chi, w_{\mathfrak{q}}^1\right\rangle & =  \frac{1}{|G^F|}\sum_{\substack{g\in G^F \\ g^{\mathfrak{q}}=1}}\frac{1}{|G_g^F|_p}\sum_{(T,\theta)}\epsilon_{G_g}\epsilon_T\left\langle R_T^G(\theta),\chi\right\rangle \theta(g) \\
 &=  \frac{1}{|G^F|}\sum_{[\fT^F]\in \mathbb{T}^{G^F}}\frac{1}{|\fT^F|_p} \sum_{s\in \tau_{G^F}^{-1}([\fT^F])}\sum_{(T,\theta)}\epsilon_{\fT}\epsilon_T\left\langle R_T^G(\theta),\chi\right\rangle \theta(s) \\
 &=  \frac{1}{|G^F|}\sum_{[\fT^F]\in \mathbb{T}^{G^F}}\frac{1}{|\fT^F|_p}  \sum_{(T,\theta)}\epsilon_{\fT}\epsilon_T\left\langle R_T^G(\theta),\chi\right\rangle   \sum_{s\in (\tau_{G^F}^{-1}([\fT^F])\cap T^F)/W(T) }\theta(s)  \\
 &=  \frac{1}{|G^F|}\sum_{[\fT^F]\in \mathbb{T}^{G^F}}\frac{1}{|\fT^F|_p}  \sum_{(T,\theta)}\epsilon_{\fT}\epsilon_T\left\langle R_T^G(\theta),\chi\right\rangle   \frac{1}{|(W(T)/W_\fT(T))^F|}\sum_{s\in  \tau_{G^F}^{-1}([\fT^F])\cap T^F }\theta(s)\\
 &=   \frac{1}{|G^F|} \sum_{[\fT^F ]\in \mathbb{T}^{G^F}}\frac{1}{|\fT^F|_p}  \sum_{(T,\theta)}\frac{\epsilon_{\fT}\epsilon_T\left\langle R_T^G(\theta),\chi\right\rangle }{ |(W(T)/W_\fT(T))^F|}  \sum_{\underline{\fT}^F\in [\fT^F]}\sum_{\substack{s\in T^F\text{ s.t.}\\G_s^F=\underline{\fT}^F} }\theta(s).
\end{split} \] 
Here, we consider $\theta(s)=0$ for $s\notin T^F$, but $\theta\in \widehat{T^F}$. 
Now, consider the set $\mathcal{T}_T^{G^F}:=\{ G_s^F\,|\, s\in T^F\}$ for a maximal torus $T$ and the M\"obius function $\mu_T:= \mu_{\mathcal{T}_T^{G^F}}$ on  $\mathcal{T}_T^{G^F}$, where the partial ordering is the inclusion. 
 Then, from our work in \S\ref{s:2}, we can easily conclude that \[\sum_{\substack{s\in T^F\text{ s.t.}\\G_s^F=\fT^F} }\theta(s)=\sum_{\substack{\fT'^F\in \mathbb{T}_T^{G^F}\\ \fT^F\subset \fT'^F}} \mu_T(\fT,\fT')\sum_{\substack{s\in T^F \\ \fT'^F\subset G_s^F} }\theta(s)=\sum_{\substack{\fT'^F\in  \mathbb{T}_T^{G^F}\\ \fT^F\subset \fT'^F}} \mu_T(\fT^F,\fT'^F)\delta_{\theta,\fT'^F}.
 \] 
 Note that if $G_s^F\neq\fT^F$ for any $s\in T^F$, 
 then the left-hand side sum is zero. So for ease of notation, in this case, we consider $\mu_T(\fT^F,\fT'^F)$ as zero.
Then we obtain the following result:
 \begin{thm}
 We have 
 \[
 \left\langle \chi, w_{\mathfrak{q}}^1\right\rangle =    \sum_{[\fT^F ]\in \mathbb{T}^{G^F}}\frac{1}{|G^F| |\fT^F|_p }  \sum_{(T,\theta)}\frac{\epsilon_{\fT}\epsilon_T\left\langle R_T^G(\theta),\chi\right\rangle   }{|(W(T) /W_\fT(T))^F|}\sum_{\underline{\fT}^F\in [\fT^F]}\sum_{\substack{\fT'^F\in  \mathcal{T}_T^{G^F}\\ \underline{\fT}^F\subset \fT'^F}} \mu_T(\underline{\fT}^F,\fT'^F)\delta_{\theta,\fT'^F}.
 \]
 \end{thm}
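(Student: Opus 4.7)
The plan is to evaluate the inner product
\[
\left\langle \chi, w_{\mathfrak{q}}^1\right\rangle = \frac{1}{|G^F|}\sum_{\substack{g\in G^F\\ g^{\mathfrak{q}}=1}} \chi(g)
\]
by first reducing to a sum over semisimple elements. Writing $g = g_s g_u$ in Jordan form, the unipotent part $g_u$ has order a power of $p$, while $\mathfrak{q} = |G^F|_{p'}$ is coprime to $p$; since $g_s$ and $g_u$ commute, $g^{\mathfrak{q}} = g_s^{\mathfrak{q}} g_u^{\mathfrak{q}}$ equals $1$ only when $g_u = 1$. Hence the summation collapses to $g \in G^{F,\ss}$.

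Next I would substitute the Deligne--Lusztig character formula
\[
\chi(s) = \frac{1}{|G_s^F|_p}\sum_{(T,\theta)} \epsilon_{G_s}\epsilon_T \langle R_T^G(\theta), \chi \rangle\, \theta(s)
\]
for each semisimple $s$, and partition the resulting sum over $s$ by pseudo-Levi type $[\fT] \in \mathbb{T}^G$ via the map $\tau_G(s) = [G_s]$. On each fiber of $\tau_G$, the factor $|G_s^F|_p$ is constant and equal to $|\fT^F|_p$, so the $(T,\theta)$-sum can be pulled outside. I then rewrite the fiberwise sum $\sum_{s \in \tau_G^{-1}([\fT])} \theta(s)$ as a sum over $F$-stable maximal tori and $W(T)^F$-orbits inside $T^F$: the ratio $|G^F|/|\fT^F|$ absorbs the length of a $G^F$-conjugation orbit through each semisimple element, and the ratio $|W_\fT(T)^F|/|W(T)^F|$ appears when replacing a sum over $W(T)^F$-orbits in $T^F$ by a sum over all of $T^F$. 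This step parallels the treatment of $\sum_{s \in \tau_w^{-1}([\fT])/W(T_w)^F}$ carried out during the proof of Theorem~\ref{thm:main-one}, and relies on the same non-degeneracy assumption on $q$ that underlies Lemma~\ref{lem:decompositionofcen}.

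Finally, for each fixed $\fT$ I would apply Möbius inversion on the poset $\mathcal{T}_T^G = \{G_s : s \in T^F\}$ ordered by inclusion to replace $\sum_{s \in T^F,\, G_s = \fT} \theta(s)$ by $\sum_{\fT' \supset \fT} \mu_T(\fT, \fT') \sum_{s \in T^F,\, G_s \supset \fT'} \theta(s)$; the identification $\{s \in T^F : G_s \supset \fT'\} = Z(\fT')^F$ together with orthogonality of $\theta$ against the trivial character on the subgroup $Z(\fT')^F$ turns the inner sum into $\delta_{\theta, \fT'}$. Assembling these pieces yields exactly the stated formula. The main obstacle is purely bookkeeping --- tracking which indices range over $G$-conjugacy classes $[\fT]$ versus individual subgroups $\fT \in [\fT]$, and verifying that the combinatorial factors $|W_\fT(T)^F|/|W(T)^F|$ and $1/|\fT^F|$ land in their correct positions --- but this exactly mirrors the reorganization already carried out in Section~\ref{s:2}, so I would import that pattern essentially verbatim rather than redo it from scratch.
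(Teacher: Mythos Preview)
Your proposal is correct and follows essentially the same route as the paper: reduce to semisimple elements via the Jordan decomposition, plug in the Deligne--Lusztig expansion of $\chi(s)$ from \cite[Corollary~7.6]{DL76}, stratify by pseudo-Levi type $[\fT]\in\mathbb{T}^G$, pass from $G^F$-orbits to $W(T)^F$-orbits inside $T^F$ (picking up $|G^F|/|\fT^F|$ and $|W_\fT(T)^F|/|W(T)^F|$), and finish with M\"obius inversion on $\mathcal{T}_T^G$ together with the identification $\{s\in T^F: G_s\supset\fT'\}=Z(\fT')^F$. The paper's argument is exactly this, and indeed explicitly refers back to the computations of \S\ref{s:2} for the last two steps, just as you propose.
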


\begin{rem}
When we consider the value $\gamma_{\mathfrak{q}}^1(1)=|G_{\mathfrak{q}}(1,1)|=|\{x\in G^F\,|\, x^{\mathfrak{q}}=1\}|$, this is the number of semisimple elements in $G^F$. 
 More explicitly, we have
 \[\begin{split}
 \gamma_{\mathfrak{q}}^1(1)&=|G^F|\sum_{\chi \in \widehat{G^F}} |\left\langle \chi , w_\mathfrak{q}^1\right\rangle|^2\\
 &=\frac{1}{|G^F|}\sum_{\chi \in \widehat{G^F}}  \left| \sum_{[\fT^F]\in \mathbb{T}^{G^F}}\frac{1}{  |\fT^F|_p}  \sum_{(T,\theta)}  \frac{ \epsilon_{\fT}\epsilon_T\left\langle R_T^G(\theta),\chi\right\rangle }{|(W(T)/W_\fT(T))^F|}\sum_{\underline{\fT}^F\in [\fT^F]}\sum_{\substack{\fT'^F\in  \mathcal{T}_T^{G^F}\\ \underline{\fT}^F\subset \fT'^F}} \mu_T(\underline{\fT}^F,\fT'^F)\delta_{\theta,\fT'^F}\right|^2.
 \end{split}
 \]
 \end{rem}

  \begin{rem}
 Another interesting point to discuss is the value of
$\left\langle \chi, w_{\tilde{\mathfrak{q}}} \right\rangle$
for $\tilde{\mathfrak{q}} = |G^F|_p$.
Using the same reasoning as for $\mathfrak{q}$, we only need to consider unipotent elements in order to compute
$\left\langle \chi, w_{\tilde{\mathfrak{q}}}^1 \right\rangle$; indeed,
$\left\langle \chi, w_{\tilde{\mathfrak{q}}}^1 \right\rangle
= \frac{1}{|G^F|}
\sum_{\substack{g \in G^{\mathrm{uni},F}}}
\chi(g)$.
In general, there is no uniform method to compute $\chi(g)$ for every unipotent element $g$.
However, when we fix \(G = \GL_n\), it is known that every irreducible character of \(\GL_n^F\) can be expressed as a linear combination of Deligne–Lusztig characters; see \cite[\S3]{LS}.
Therefore, in this case, $\left\langle \chi, w_{\tilde{\mathfrak{q}}}^1 \right\rangle$ can be computed for each \(\chi \in \widehat{\GL_n^F}\) as a sum of values of Green functions.
We omit the explicit computation here.
  \end{rem}

 \bigskip

 \section{Multiplicity of unipotent almost   characters}\label{s:4}
When we study irreducible characters of finite reductive groups, unipotent characters play a crucial role. So when we  consider multiplicity, it is a natural question to compute the multiplicity of tensor product of unipotent characters. This problem is considered by Hiss and L\"ubeck for small rank cases in \cite{HL} (also with unipotent almost characters), and by Letellier for finite general linear groups in \cite{letellier2013tensor}. Note that every unipotent almost character of $\GL_n(\Fq)$ is a unipotent   character, and vice versa. 

While studying the results in \S\ref{s:2}, we realised that we could also consider the multiplicity of the tensor product of unipotent almost characters following the idea of \cite{HL,letellier2013tensor}. Consequently, in this section, we compute this multiplicity 
$$\left\langle U_{\chi_1}\otimes U_{\chi_2}\otimes\cdots \otimes  U_{\chi_m},1 \right\rangle=\frac{1}{|G^F|}\sum_{g\in G^F} U_{\chi_1}(g) U_{\chi_2}(g) \cdots U_{\chi_m}(g).$$    This serves as a generalisation of \cite{HL}. A natural subsequent question is how to determine when this multiplicity vanishes or is non-zero.

To compute the multiplicity, recall the following formula for the value of the Deligne-Lusztig character from Proposition \ref{thm:charactervalue} and Equation \eqref{eq:DLcharacter-with-semisimple}: \[
R_{T_w}^G(1)(g)=\frac{1}{|G_{g_s}^{F}|}\sum_{\substack{x\in G^F\text{ s.t.}\\ x^{-1}g_sx\in T_w^F}}Q_{xT_wx^{-1}}^{G_{g_s}}(g_u)=\sum_{v\in W_w(\fT)}Q_{\tilde{v}^{-1}T_w\tilde{v}}^{G_{g_s}}(g_u).
\]

 \subsection{Multiplicity}
Let us consider the following decomposition over $\Xi^{G^F}$ as introduced in \S\ref{ss:types}:
 \[\begin{split}\left\langle U_{\chi_1}\otimes U_{\chi_2}\otimes\cdots \otimes  U_{\chi_m},1 \right\rangle&=\frac{1}{|G^F|}\sum_{g\in G^F} U_{\chi_1}(g) U_{\chi_2}(g)\cdots U_{\chi_m}(g)\\
  &=\frac{1}{|G^F|}\sum_{\xi \in \Xi^{G^F}}\sum_{g\in \omega_{G^F}^{-1}(\xi)} U_{\chi_1}(g) U_{\chi_2}(g)\cdots  U_{\chi_m}(g).\end{split}\]
The problem is   reduced to computing the term $\sum_{g\in \omega_{G^F}^{-1}(\xi)} U_{\chi_1}(g) U_{\chi_2}(g) \cdots U_{\chi_m}(g)$ for each $\xi \in \Xi^{G^F}$.

\subsubsection{} Let 
$\xi=[\fT^F, u]$. Then we have that
\begin{equation*}\label{eq:partialsumtau}
\begin{split}
\sum_{g\in \omega_{G^F}^{-1}(\xi)} U_{\chi_1}(g) U_{\chi_2}(g) \cdots U_{\chi_m}(g)&=\frac{1}{|W(T_1)|^m}\sum_{g\in \omega_{G^F}^{-1}(\xi)}\prod_{i=1}^m\left (\sum_{\substack{w \in W(T_1) }} \chi_i(w)R_{T_{w}}^G(1)(g)\right)\\
&=\frac{1}{|W(T_1)|^m}\sum_{g\in \omega_{G^F}^{-1}(\xi)} \prod_{i=1}^m\left(\sum_{w\in W(T_1)} \chi_i(w)\sum_{v\in W_{w}(\fT)}Q_{{\tilde{v}^{-1}T_{w }\tilde{v} }}^{\fT}(g_u)\right)\\
&=\frac{|\omega_{G^F}^{-1}(\xi)|}{|W(T_1)|^m} \prod_{i=1}^m\left(\sum_{w\in W(T_1)} \chi_i(w)\sum_{v\in W_{w}(\fT)}Q_{{\tilde{v}^{-1}T_{w }\tilde{v} }}^{\fT}(u)\right)
\end{split}
\end{equation*}
where the last equality comes from the property that the Green function $Q_{T_{w }}^{\fT}$ is a class function over $\fT^F$. Recall that we ignore $R_{T_{w }}^G(1)(g)$ in the sum when $g_s$ is not $G^F$-conjugate to an element of $T_{w }^F$ with the same reason in \S\ref{intro:multiplicity}.
Using this observation, we obtain the following result.
\begin{thm}We have
\[\begin{split}
&\left\langle U_{\chi_1}\otimes U_{\chi_2}\otimes \cdots \otimes U_{\chi_m},1 \right\rangle\
=\frac{1}{|G^F||W(T_1)|^m} \sum_{\xi=[\fT^F,u] \in \Xi^{G^F}}  |\omega_{G^F}^{-1}(\xi)|\prod_{i=1}^m\left(\sum_{w\in W(T_1)} \chi_i(w)\sum_{v\in W_{w}(\fT)}Q_{{\tilde{v}^{-1}T_{w }\tilde{v}}}^{\fT}(u)\right),
\end{split}
\]
with considering $Q_{\tilde{v}^{-1}T_{w}\tilde{v}}^\fT=0$ when $\tilde{v}^{-1}T_{w}\tilde{v}$ is not in $\fT$ up to $G^F$-conjugation. 
\end{thm}

\subsection{Contribution of irreducible characters of the Weyl group}\label{ss:43}
 
Our future goal is to find when the multiplicity
$ \langle U_{\chi_1} \otimes U_{\chi_2} \otimes \cdots \otimes U_{\chi_m},\, 1  \rangle$
vanishes or not.
For this purpose, we believe that we need to compute term-by-term over types carefully. For this reason, we focus on special terms
 $\xi = [T_w, 1]$. Here, for any $g \in \omega_{G^F}^{-1}(\xi)$, the element $g$ is regular semisimple and lies in $T_w$ (up to conjugation).
  \begin{lem}
For a type $\xi=[T_w^F,1]\in \Xi^{G^F}$ and the conjugacy class $[w]$ of $w$ in $W(T_1)$, we have 
\[
\sum_{g\in \omega_{G^F}^{-1}(\xi)} U_{\chi_1}(g) U_{\chi_2}(g) \cdots U_{\chi_m}(g)=
|\omega_{G^F}^{-1}(\xi)| \chi_1(w)\cdots \chi_m(w).
\]
\end{lem}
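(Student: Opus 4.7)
The plan is to specialise the formula from the preceding theorem to $\xi = [\fT,u]=[T_w,1]$ and simplify the resulting inner sum. That theorem gives
\[
\sum_{g\in \omega_G^{-1}(\xi)} U_{\chi_1}(g)\cdots U_{\chi_m}(g) = \frac{|\omega_G^{-1}(\xi)|}{|W(T_1)|^m}\sum_{\substack{w_i\in W(T_1) \\ i=1,\ldots , m}}\prod_{i=1}^m \chi_i(w_i)\,\frac{|W(T_{w_i})^F|}{|W_{\fT}(T_{w_i})^F|}\,Q_{T_{w_i}}^{\fT}(1),
\]
so the entire task reduces to evaluating the inner sum when $\fT = T_w$.

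The key observation is that since $\fT = T_w$ is itself a torus, the Green function $Q_{T_{w_i}}^{T_w}$ vanishes unless some $G^F$-conjugate of $T_{w_i}$ is a maximal torus of $T_w$, which forces $T_{w_i}$ to be $G^F$-conjugate to $T_w$. In the untwisted setting, this is equivalent to $w_i$ lying in the conjugacy class $[w]$ of $W(T_1)$. For each such $w_i$, one has $Q_{T_{w_i}}^{T_w}(1) = R_{T_w}^{T_w}(1)(1) = 1$, since $R_{T_w}^{T_w}(1)$ is simply the trivial character of the torus $T_w$.

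Next I compute the Weyl group factor. For $w_i \in [w]$, after conjugating $T_{w_i}$ into $T_w$ we have $W_{\fT}(T_{w_i})^F = W_{T_w}(T_w)^F = 1$, while $|W(T_{w_i})^F| = |C_{W(T_1)}(w_i)| = |W(T_1)|/|[w]|$. Hence the factor $|W(T_{w_i})^F|/|W_{\fT}(T_{w_i})^F|$ equals $|W(T_1)|/|[w]|$ for every $w_i \in [w]$.

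Finally, using that each $\chi_i$ is a class function on $W(T_1)$, so $\chi_i(w_i) = \chi_i(w)$ for $w_i \in [w]$, the inner sum factorises over the indices $i=1,\ldots,m$ as
\[
\sum_{\substack{w_i\in [w] \\ i=1,\ldots,m}}\prod_{i=1}^m \chi_i(w)\cdot\frac{|W(T_1)|}{|[w]|} = \prod_{i=1}^m\left(|[w]|\cdot\chi_i(w)\cdot\frac{|W(T_1)|}{|[w]|}\right) = |W(T_1)|^m\prod_{i=1}^m\chi_i(w).
\]
Substituting this back and cancelling $|W(T_1)|^m$ yields the stated identity. The only delicate step is the correct interpretation of $Q_{T_{w_i}}^{\fT}(u)$ when $T_{w_i}$ is not literally contained in $\fT$: one uses the standard convention that this Green function depends only on the $G^F$-conjugacy class of the pair $(T_{w_i},\fT)$ and vanishes whenever that class cannot be realised with $T_{w_i}\subset \fT$.
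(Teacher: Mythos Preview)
Your proof is correct and follows essentially the same route as the paper: both restrict the inner sum to $w_i\in[w]$ via the vanishing of $Q_{T_{w_i}}^{T_w}$ when $T_{w_i}$ is not $G^F$-conjugate to $T_w$, then use $Q_{T_w}^{T_w}(1)=1$, $W_{T_w}(T_w)^F=1$, and $|W(T_{w_i})^F|=|C_{W(T_1)}(w)|=|W(T_1)|/|[w]|$ to collapse the product. Your explicit remark on the conjugacy convention for $Q_{T_{w_i}}^{\fT}$ is a helpful clarification the paper leaves implicit.
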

\begin{proof}
Let us consider a regular element $s$ in $T_w^F$ up to $G^F$-conjugation. Then $s$ is not in any other $T_v$ up to $G^F$-conjugation if $w$ and $v$ are not $W(T_1)$-conjugate. This implies that  we only need to consider those $w' \in [w]$ from Theorem \ref{eq:prop6}. Hence with $W_w(T_w)=W(T_w)^F$, we obtain
\[\begin{split}
\sum_{g\in \omega_{G^F}^{-1}(\xi)} U_{\chi_1}(g) U_{\chi_2}(g) \cdots U_{\chi_m}(g)&=\frac{|\omega_{G^F}^{-1}(\xi)|}{|W(T_1)|^m}\prod_{i=1}^m\left(\sum_{{\bar{w}}\in W(T_1)} \chi_i({\bar{w}})\sum_{v\in W_{{\bar{w}}}(T_w)}Q_{{\tilde{v}^{-1}T_{{\bar{w}} }\tilde{v}}}^{T_w}(u)\right)\\
&= \frac{|\omega_{G^F}^{-1}(\xi)|}{|W(T_1)|^m}\prod_{i=1}^m \left( \sum_{\substack{\underline{w} \in [w] }}\chi_i(\underline{w} ){|W(T_{\underline{w} })^F|}\right)
\\
&= \frac{|\omega_{G^F}^{-1}(\xi)|}{|W(T_1)|^m}{|W(T_{w})^F|^m}\prod_{i=1}^m \left( \sum_{\underline{w}\in [w]} \chi_i(\underline{w})\right)
\\
&= \frac{|\omega_{G^F}^{-1}(\xi)|}{|W(T_1)|^m}{|W(T_{w})^F|^m}|[w]|^m\prod_{i=1}^m \chi_i(w).
\end{split}
\]
Since $W(T_w)^F\simeq C_W(w) $ from \cite[Proposition 3.3.6]{carter1985finite}, we have $|W(T_{w})^F||[w]|=|W(T_1)|$, which completes the proof.
\end{proof}

\begin{lem}\label{lem:coefficientofinverseregular} For any $\xi=[T_w^F,1]$, we have  $|\omega_{G^F}^{-1}(\xi)| =\frac{|[w]|}{|W(T_1)|}q^{\mathrm{dim} (G)}+(\text{lower degree terms})$.
\end{lem}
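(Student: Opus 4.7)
The plan is to compute $|\omega_G^{-1}(\xi)|$ directly as an orbit sum and then extract the leading term. Since $\xi = [T_w, 1]$ consists of a maximal torus together with the identity, $\omega_G^{-1}(\xi)$ consists of those $g \in G^F$ with $g_u = 1$ (so $g = g_s$ is semisimple) and $G_g$ equal to a $G^F$-conjugate of $T_w$. The condition $G_g = T_w$ is precisely regularity of the semisimple element $g$ inside $T_w$.

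First I would fix such a $g$ and apply the orbit–stabilizer theorem: the $G^F$-conjugacy class of $g$ has size $|G^F|/|G_g^F| = |G^F|/|T_w^F|$. The $G^F$-conjugacy classes of regular semisimple elements with centralizer $G^F$-conjugate to $T_w$ correspond bijectively to $W(T_w)^F$-orbits on the regular locus $T_w^{F,\mathrm{reg}} := \{s \in T_w^F : G_s = T_w\}$ (the $W(T_w)^F$-action being free on the regular locus). Combining these counts gives
\[
|\omega_G^{-1}(\xi)| \;=\; \frac{|G^F|}{|T_w^F|}\cdot\frac{|T_w^{F,\mathrm{reg}}|}{|W(T_w)^F|}.
\]

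Next I would extract the leading term in $q$. Both $|G^F|$ and $|T_w^F|$ are polynomials in $q$ with leading coefficient $1$ of degrees $\dim(G)$ and $rk(G)$ respectively. The complement $T_w^F \setminus T_w^{F,\mathrm{reg}}$ lies in the union of the root subtori $\{t \in T_w : \alpha(t) = 1\}$ over the (finitely many) roots $\alpha$ of $G$; each such subtorus has dimension $rk(G)-1$, so by inclusion–exclusion $|T_w^{F,\mathrm{reg}}| = q^{rk(G)} + O(q^{rk(G)-1})$. Therefore the leading term of the displayed expression is $q^{\dim(G)}/|W(T_w)^F|$. By \cite[Proposition~3.3.6]{carter1985finite} (as already invoked in the preceding lemma) we have $|W(T_w)^F| = |C_{W(T_1)}(w)|$, and the orbit–stabilizer formula inside $W(T_1)$ gives $|C_{W(T_1)}(w)| = |W(T_1)|/|[w]|$. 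Substituting yields the claimed leading coefficient $|[w]|/|W(T_1)|$.

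The only nontrivial input is the estimate $|T_w^{F,\mathrm{reg}}| = |T_w^F| + O(q^{rk(G)-1})$, but this is a standard polynomial-count argument on the open subvariety $T_w^{\mathrm{reg}} \subset T_w$ whose complement is a finite union of proper $F$-stable subtori. Everything else is bookkeeping with orbit–stabilizer and Carter's identification of $W(T_w)^F$.
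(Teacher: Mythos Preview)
Your proof is correct and follows essentially the same approach as the paper's: both use orbit--stabilizer to reduce to counting $W(T_w)^F$-orbits of regular elements in $T_w^F$, invoke \cite[Proposition~3.3.6]{carter1985finite} for the identification $|W(T_w)^F|=|C_{W(T_1)}(w)|=|W(T_1)|/|[w]|$, and extract the leading term from the fact that the regular locus in $T_w^F$ has full dimension $rk(G)$. Your presentation is more explicit---you write out the exact formula
\[
|\omega_G^{-1}(\xi)|=\frac{|G^F|}{|T_w^F|}\cdot\frac{|T_w^{F,\mathrm{reg}}|}{|W(T_w)^F|}
\]
before taking leading terms---whereas the paper simply lists the ingredients and cites \cite[Lemma~2.3.11]{geck2020character} for the dimension of the regular locus.

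One small caution: the individual root kernels $\ker(\alpha)\subset T_w$ need not be $F$-stable when $T_w$ is non-split, since $F$ permutes the roots through $w$. So your phrase ``finite union of proper $F$-stable subtori'' and the accompanying inclusion--exclusion are not literally correct. What is true (and sufficient) is that the \emph{union} $\bigcup_\alpha\ker(\alpha)$ is an $F$-stable closed subvariety of $T_w$ of dimension $rk(G)-1$, whence $|T_w^F\setminus T_w^{F,\mathrm{reg}}|=O(q^{rk(G)-1})$. Your conclusion is unaffected.
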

\begin{proof}
The coefficient can be computed using 
$N_G(T_w)^F/C_G(T_w)^F = N_G(T_w)^F/T_w^F \simeq C_W(w)$ 
from \cite[Proposition 3.3.6]{carter1985finite}, together with the fact that 
$C_{\,N_G(T_w)^F/T_w^F}(t) = \{1\}$ 
for a regular element $t \in T_w$.  The degree of $\omega_{G^F}^{-1}(\xi)$ comes from the facts that $\dim(\omega_{G }^{-1}(\xi)\cap T_w )=\mathrm{rk}(G)$ (cf. \cite[Lemma 2.3.11]{geck2020character}) and $\dim(G/G_s)=\dim(G)-\mathrm{rk}(G)$ for any $s\in \omega_{G^F}^{-1}(\xi)$.\end{proof}
 

\subsubsection{}This observation implies the following:
  \begin{equation*}
  \begin{split}&
  \left\langle U_{\chi_1}\otimes U_{\chi_2}\otimes \cdots \otimes U_{\chi_m},1 \right\rangle\\
  &=\frac{1}{|G^F|}\sum_{\xi \in \Xi^{G^F}}\sum_{g\in \omega_{G^F}^{-1}(\xi)}
  U_{\chi_1}(g) U_{\chi_2}(g)\cdots  U_{\chi_m}(g) \\
  &=\frac{1}{|G^F|}\sum_{\substack{\xi=[\fT^F,1] \in\Xi^{G^F}\\ \fT\ \text{is a torus}}}\sum_{g\in \omega_{G^F}^{-1}(\xi)}
  U_{\chi_1}(g) U_{\chi_2}(g) \cdots U_{\chi_m}(g)+\frac{1}{|G^F|}\sum_{\substack{\xi=[\fT^F,u] \in \Xi^{G^F}\\\fT\ \text{is not a torus}}}\sum_{g\in \omega_{G^F}^{-1}(\xi)}
  U_{\chi_1}(g) U_{\chi_2}(g) \cdots U_{\chi_m}(g) \\
  &=\frac{1}{|G^F|}\sum_{\substack{\xi=[\fT^F,1] \in \Xi^{G^F}\\ \fT=T_w\text{ for some }w}}|\omega_{G^F}^{-1}(\xi)| \chi_1(w)\cdots \chi_m(w)+\frac{1}{|G^F|}\sum_{\substack{\xi=[\fT^F,u] \in \Xi^{G^F}\\ \fT\ \text{is not a torus}}}\sum_{g\in \omega_{G^F}^{-1}(\xi)}
  U_{\chi_1}(g) U_{\chi_2}(g)\cdots  U_{\chi_m}(g)
 \end{split}\end{equation*}
 Let $[W(T_1)]$ be the conjugacy classes of $W(T_1)$, and then we can give the following result.
 \begin{lem}We have
  \begin{equation}\label{eq:toruspartleadingterm}
  \begin{split}&
  |G^F|\left\langle U_{\chi_1}\otimes U_{\chi_2}\otimes\cdots \otimes U_{\chi_m},1 \right\rangle  \\
  &=\sum_{[w]\in [W(T_1)]} \frac{|[w]|}{|W(T_1)|}\chi_1(w)\chi_2(w)\cdots \chi_m(w) q^{\mathrm{dim} (G)}+(\text{lower degree terms})\\
  &\quad + \sum_{\substack{\xi=[\fT^F,u] \in \Xi^{G^F}\\ \fT\ \text{is not a torus}}}\sum_{g\in \omega_{G^F}^{-1}(\xi)}
  U_{\chi_1}(g) U_{\chi_2}(g) \cdots U_{\chi_m}(g) \\
  &=\left\langle \chi_1\otimes \chi_2\otimes\cdots \otimes \chi_m,1\right\rangle_{W(T_1)} \cdot  q^{\mathrm{dim} (G)}+(\text{lower degree terms})\\
  & \quad+ \sum_{\substack{\xi=[\fT^F,u] \in \Xi^{G^F}\\ \fT\ \text{is not a torus}}}\sum_{g\in \omega_{G^F}^{-1}(\xi)}
  U_{\chi_1}(g) U_{\chi_2}(g) \cdots U_{\chi_m}(g).
 \end{split}\end{equation}
 \end{lem}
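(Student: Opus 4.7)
The plan is to split the sum over $\Xi^G$ appearing in the previous theorem into the contribution of torus types $\xi=[\fT,1]$ (with $\fT$ a torus, i.e.\ $\fT=T_w$ for some $w$) and the contribution of all remaining types, and then to compute the leading term of the torus contribution explicitly.

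First, starting from the identity
\[
|G^F|\left\langle U_{\chi_1}\otimes \cdots \otimes U_{\chi_m},1\right\rangle
=\sum_{\xi\in\Xi^G}\sum_{g\in\omega_G^{-1}(\xi)} U_{\chi_1}(g)\cdots U_{\chi_m}(g),
\]
isolate the subset of types of the form $\xi=[T_w,1]$. For such $\xi$, the preceding lemma gives
\[
\sum_{g\in\omega_G^{-1}(\xi)} U_{\chi_1}(g)\cdots U_{\chi_m}(g) = |\omega_G^{-1}(\xi)|\,\chi_1(w)\cdots\chi_m(w),
\]
so the torus contribution becomes $\sum_{[T_w,1]}|\omega_G^{-1}([T_w,1])|\,\chi_1(w)\cdots\chi_m(w)$, while all other types are collected into the remainder sum already appearing in the claim.

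Next, I will use that the assignment $w\mapsto [T_w,1]$ descends to a bijection between the $W(T_1)$-conjugacy classes $[W(T_1)]$ and the torus types in $\Xi^G$ (two $F$-stable maximal tori $T_w$ and $T_v$ are $G^F$-conjugate iff $w$ and $v$ are $W(T_1)$-conjugate, since $F$ acts trivially on $W(T_1)$ in the untwisted split case). Combined with Lemma~\ref{lem:coefficientofinverseregular}, which gives $|\omega_G^{-1}([T_w,1])|=\frac{|[w]|}{|W(T_1)|}q^{\dim G}+(\text{lower degree terms})$, the torus contribution becomes
\[
\sum_{[w]\in[W(T_1)]}\frac{|[w]|}{|W(T_1)|}\chi_1(w)\cdots\chi_m(w)\,q^{\dim G}+(\text{lower degree terms}),
\]
which is the first displayed line in the lemma. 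For the second equality, I recognise the leading coefficient as the standard character-theoretic formula
\[
\left\langle\chi_1\otimes\cdots\otimes\chi_m,1\right\rangle_{W(T_1)}
=\frac{1}{|W(T_1)|}\sum_{w\in W(T_1)}\chi_1(w)\cdots\chi_m(w)
=\sum_{[w]\in[W(T_1)]}\frac{|[w]|}{|W(T_1)|}\chi_1(w)\cdots\chi_m(w),
\]
and substitute.

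The main delicate point is verifying that torus types in $\Xi^G$ are indeed parametrised by $[W(T_1)]$ (rather than overcounted or undercounted), and that the lower-order remainder coming from $|\omega_G^{-1}([T_w,1])|$ can be cleanly absorbed into the $(\text{lower degree terms})$ symbol without interfering with the non-torus remainder, which is kept separate. Everything else is bookkeeping that combines the earlier lemma on regular semisimple contributions with the polynomial count of $|\omega_G^{-1}(\xi)|$.
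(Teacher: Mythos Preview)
Your proposal is correct and follows essentially the same approach as the paper: split the type sum into torus and non-torus parts, apply the preceding lemma together with Lemma~\ref{lem:coefficientofinverseregular} to extract the leading $q^{\dim G}$-term on torus types, and rewrite the resulting conjugacy-class sum as the inner product $\langle \chi_1\otimes\cdots\otimes\chi_m,1\rangle_{W(T_1)}$. The only point the paper leaves implicit that you spell out is the bijection $[w]\leftrightarrow [T_w,1]$ between $W(T_1)$-conjugacy classes and torus types in $\Xi^G$, which is indeed standard in the untwisted split setting.
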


\subsubsection{Beyond general linear group}
Now let us recall a result of Letellier. He showed that in the case of \(\GL_n^F\), if
$
\left\langle \chi_1 \otimes \chi_2 \otimes \cdots \otimes \chi_m, 1 \right\rangle_{S_n} \neq 0,
$
then
$
\left\langle U_{\chi_1} \otimes U_{\chi_2} \otimes \cdots \otimes U_{\chi_m}, 1 \right\rangle_{\GL_n^F} \neq 0,
$
cf.~\cite{letellier2013tensor} or \cite[Theorem 2.3.1]{letellier2023saxl}. Then we can anticipate the same phenomenon from Equation \eqref{eq:toruspartleadingterm}. However,   this is not true in some other cases. When we consider $G=\mathrm{Sp}_4$, then the multiplicity $\langle \zeta_1\otimes \zeta_2 \otimes \zeta_3,1\rangle_{W(T_1)}=1$, where $\zeta_1,\zeta_2$ and $\zeta_3$ are all distinct one-dimensional  non-trivial characters of $W(T_1)$. However, from \cite[$C_{2,sc}(q)$]{Mat}, we can check that the corresponding  multiplicity of unipotent almost characters $1,2,5$ (following their notation) vanishes. Therefore, the statement `$
\left\langle \chi_1 \otimes \chi_2 \otimes \cdots \otimes \chi_m, 1 \right\rangle_{W(T_1)} \neq 0,
$
then
$
\left\langle U_{\chi_1} \otimes U_{\chi_2} \otimes \cdots \otimes U_{\chi_m}, 1 \right\rangle_{G^F} \neq 0
$' does not hold in general. Therefore, finding a condition which guarantees that the non-zero value of the multiplicity $
\left\langle U_{\chi_1} \otimes U_{\chi_2} \otimes \cdots \otimes U_{\chi_m}, 1 \right\rangle_{G^F} \neq 0
$ would be an interesting question.

 \bigskip
\noindent \textbf{Acknowledgments}		
GyeongHyeon Nam was supported by Oscar Kivinen's Väisälä project grant of the Finnish Academy of Science and Letters.
The author is very grateful to Jungin Lee and Anna Pusk\'as for helpful advice.

\begin{bibdiv}
\begin{biblist}

\bib{carter1985finite}{book}{
    AUTHOR = {Carter, R. W.},
     TITLE = {Finite groups of {L}ie type},
    SERIES = {Wiley Classics Library},
      NOTE = {Conjugacy classes and complex characters,
              Reprint of the 1985 original,
              A Wiley-Interscience Publication},
 PUBLISHER = {John Wiley \& Sons, Ltd., Chichester},
      YEAR = {1993},
     PAGES = {xii+544},
}

\bib{DL76}{article}{
    AUTHOR = {Deligne, P.},
    author={Lusztig, G.},
     TITLE = {Representations of reductive groups over finite fields},
   JOURNAL = {Annals of Mathematics  },
     VOLUME = {103},
      YEAR = {1976},
    NUMBER = {1},
     PAGES = {103--161},
      ISSN = {0003-486X},
}

\bib{DH}{article}{
  title={The M{\"o}bius function of the lattice of closed subsystems of a root system},
  author={Deriziotis, D. I.},
   author={ Holt, D. F.},
  journal={Communications in Algebra},
  volume={21},
  number={5},
  pages={1543--1570},
  year={1993},
  publisher={Taylor \& Francis}
}

\bib{eti}{book}{
  title={Introduction to representation theory},
  author={Etingof, P. I.},
   author={ Golberg, O.},
    author={ Hensel, S.},
     author={ Liu, T.},
      author={ Schwendner, A.},
       author={ Vaintrob, D.},
        author={ Yudovina, E.},
  volume={59},
  year={2011},
  publisher={American Mathematical Soc.}
}
 
\bib{FJ}{article}{
  title={The lattices and M{\"o}bius functions of stable closed subrootsystems and hyperplane complements for classical Weyl groups},
  author={Fleischmann, P.},
   author={ Janiszczak, I.},
  journal={manuscripta mathematica},
  volume={72},
  number={1},
  pages={375--403},
  year={1991},
  publisher={Springer}
}

\bib{FGSV}{article}{
    AUTHOR = {Fuchs, J.},
     author={ Ganchev, A. Ch.}, 
     author={ Szlach\'anyi, K.},
      author={Vecserny\'es, P.},
     TITLE = {{$S_4$} symmetry of {$6j$} symbols and {F}robenius-{S}chur
              indicators in rigid monoidal {$C^*$} categories},
JOURNAL = {Journal of Mathematical Physics},
    VOLUME = {40},
      YEAR = {1999},
    NUMBER = {1},
     PAGES = {408--426},
}

\bib{geck2025}{article}{
    AUTHOR = {Geck, M.},
     TITLE = {On the computation of character values for finite {C}hevalley
              groups of exceptional type},
   JOURNAL  = {Pure and Applied Mathematics Quarterly},
    VOLUME = {21},
      YEAR = {2025},
    NUMBER = {1},
     PAGES = {275--325},
}

\bib{geck2020character}{book}{
  title={The character theory of finite groups of Lie type: a guided tour},
  author={Geck, M.},
   author={ Malle, G.},
  volume={187},
  year={2020},
  publisher={Cambridge University Press}
}

\bib{HLRV}{article}{
    AUTHOR = {Hausel, T.},
    author={ Letellier, E.},
    author={
              Rodriguez-Villegas, F.},
     TITLE = {Arithmetic harmonic analysis on character and quiver
              varieties},
  JOURNAL = {Duke Mathematical Journal},
    VOLUME = {160},
      YEAR = {2011},
    NUMBER = {2},
     PAGES = {323--400}
}

\bib{heide2013conjugacy}{article}{
  title={Conjugacy action, induced representations and the Steinberg square for simple groups of Lie type},
  author={Heide, G.},
   author={ Saxl, J.},
  author={ Tiep, P. H.},
   author={ Zalesski, A. E.},
  journal={Proceedings of the London Mathematical Society},
  volume={106},
  number={4},
  pages={908--930},
  year={2013},
  publisher={Wiley Online Library}
}

\bib{HL}{incollection}{
    AUTHOR = {Hiss, G.},
    AUTHOR={ L\"ubeck, F.},
     TITLE = {Some observations on products of characters of finite
              classical groups},
 BOOKTITLE = {Finite groups 2003},
     PAGES = {195--207},
 PUBLISHER = {Walter de Gruyter, Berlin},
      YEAR = {2004},
      ISBN = {3-11-017447-2},
   MRCLASS = {20C33 (20G40)},
  MRNUMBER = {2125073},
MRREVIEWER = {Toshiaki\ Shoji},
}
 
 \bib{HL2}{article}{
  title={Induced cuspidal representations and generalised Hecke rings},
  author={Howlett, R. B.},
   author={ Lehrer, G. I.},
  journal={Inventiones mathematicae},
  volume={58},
  number={1},
  pages={37--64},
  year={1980},
  publisher={Springer}
}

\bib{IMM}{article}{
    AUTHOR = {Iovanov, M.},
     author={ Mason, G.},
     author={ Montgomery, S.},
     TITLE = {{$FSZ$}-groups and {F}robenius-{S}chur indicators of quantum
              doubles},
  JOURNAL = {Mathematical Research Letters},
    VOLUME = {21},
      YEAR = {2014},
    NUMBER = {4},
     PAGES = {757--779},
}

\bib{KNP}{article}{
	title={Arithmetic geometry of character varieties with regular monodromy},
  author={Kamgarpour, M.},
author={ Nam, G.},
author={ Pusk{\'a}s, A.},
	journal={Representation Theory},
	volume={29},
	number={11},
	pages={347--378},
	year={2025}
}

\bib{KNWG}{article}{
  title={Counting points on generic character varieties},
  author={Kamgarpour, M.},
   author={ Nam, G.},
    author={ Whitbread, B.},
     author={ Giannini, S.},
     JOURNAL = {Mathematical Research Letters},
    VOLUME = {33},
      YEAR = {2026},
    NUMBER = {2},
     PAGES = {341--395}
}

\bib{KSZ}{article}{
  title={On higher Frobenius-Schur indicators},
  author={Kashina, Y.},
  author={ Sommerh{\"a}user, Y.},
   author={ Zhu, Y.},
  volume={181},
  number={855},
   PAGES = {viii+65},
  year={2006},
JOURNAL = {Memoirs of the American Mathematical Society},
}

 \bib{MOflow}{misc}{
    TITLE = {Unipotent almost characters},
      author={Lafes},
    HOWPUBLISHED = {MathOverflow},
        JOURNAL = {MathOverflow},
    NOTE = {MathOverflow \href{https://mathoverflow.net/q/500406}{https://mathoverflow.net/q/500406}},
    EPRINT = {https://mathoverflow.net/q/500406},
    URL = {https://mathoverflow.net/q/500406}
}

\bib{lehrer1996space}{article}{
  title={The space of invariant functions on a finite Lie algebra},
  author={Lehrer, G.},
  journal={Transactions of the American Mathematical Society},
  volume={348},
  number={1},
  pages={31--50},
  year={1996}
}

\bib{letellier2013tensor}{article}{
  title={Tensor products of unipotent characters of general linear groups over finite fields},
  author={Letellier, E.},
  journal={Transformation Groups},
  volume={18},
  number={1},
  pages={233--262},
  year={2013},
  publisher={Springer}
}

\bib{letellier2023saxl}{article}{
  title={The Saxl conjecture and the tensor square of unipotent characters of $\GL_n (q) $},
  author={Letellier, E.},
   author={ Nam, G.},
  journal={Algebraic Combinatorics},
  volume={8},
  number={4},
  pages={1119--1140},
  year={2025}
}

\bib{LM}{article}{
  title={A Frobenius--Schur theorem for Hopf algebras},
  author={Linchenko, V.},
   author={ Montgomery, S.},
  journal={Algebras and Representation Theory},
  volume={3},
  number={4},
  pages={347--355},
  year={2000},
  publisher={Springer}
}

 \bib{Mat}{misc}{
title={\href{https://www.math.rwth-aachen.de/~Frank.Luebeck/preprints/tprdexmpl/tendec.html}{https://www.math.rwth-aachen.de/~Frank.Luebeck/preprints/tprdexmpl/tendec.html}},
author={ L\"ubeck, F.}}

\bib{lusztig}{book}{
  title={Characters of reductive groups over a finite field},
  author={Lusztig, G.},
 SERIES = {Annals of Mathematics Studies},
    VOLUME = {107},
 PUBLISHER = {Princeton University Press, Princeton, NJ},
      YEAR = {1984},
     PAGES = {xxi+384},
      ISBN = {0-691-08350-9; 0-691-08351-7},
   MRCLASS = {20G05 (14L20 20C15)},
  MRNUMBER = {742472},
MRREVIEWER = {Bhama\ Srinivasan},
       DOI = {10.1515/9781400881772},
       URL = {https://doi.org/10.1515/9781400881772},
}

\bib{LS}{article}{
  title={The characters of the finite unitary groups},
  author={Lusztig, G.},
   author={ Srinivasan, B.},
  journal={Journal of Algebra},
  volume={49},
  number={1},
  pages={167--171},
  year={1977},
  publisher={Academic Press}
}

 \bib{Nam}{article}{
  title={Absolutely indecomposable quasi-parabolic $ G $-bundles and the multiplicity of irreducible characters},
  author={Nam, G.},
  journal={arXiv preprint arXiv:2605.26963},
  year={2026}
}

 \bib{sch}{article}{
 	title={Higher Frobenius-Schur indicators for Drinfeld doubles of finite groups through characters of centralizers},
 	author={Schauenburg, P.},
 	journal={arXiv preprint arXiv:1604.02378},
 	year={2016}
 }

\bib{springer1980steinberg}{article}{
  title={The Steinberg function of a finite Lie algebra},
  author={Springer, T. A.},
  journal={Inventiones mathematicae},
  volume={58},
  number={3},
  pages={211--215},
  year={1980},
  publisher={Springer}
}

\end{biblist}
\end{bibdiv}
\end{document}